\def\th@exercise{%
  \normalfont 
  \thm@headpunct{:}%
}
\theoremstyle{plain}
\newtheorem{thm}{Theorem}[section]
\newtheorem{cor}[thm]{Corollary}
\newtheorem{prop}[thm]{Proposition}
\newtheorem{lemm}[thm]{Lemma}
\newtheorem{rem}{Remark}
\pgfplotsset{compat=1.15}
\theoremstyle{remark}
\theoremstyle{plain}
\newtheoremstyle{note}
  {3pt}
  {3pt}
  {}
  {}
  {\itshape}
  {:}
  {.5em}
  {}
\newtheoremstyle{citing}
  {3pt}
  {3pt}
  {\itshape}
  {}
  {\bfseries}
  {.}
  {.5em}
  {\thmnote{#3}}
\theoremstyle{citing}
\newtheoremstyle{break}
  {9pt}
  {9pt}
  {\itshape}
  {}
  {\bfseries}
  {.}
  {\newline}
  {}
\let\lvert=|\let\rvert=|
\title{Enumeration of solvable cube-free groups and counting certain types of split extensions}
\author{Prashun Kumar \footnote{Corresponding author, Dr. B. R. Ambedkar University Delhi, Delhi 110006, India; \ E-mails: prashunkumar.19@stu.aud.ac.in,  prashun07kumar@gmail.com.}
\ and \ Geetha Venkataraman \footnote{ Dr. B. R. Ambedkar University Delhi, Delhi 110006, India; \ E-mails: geetha@aud.ac.in,  geevenkat@gmail.com.}}
\begin{document}
\fontfamily{cmr}\selectfont

\maketitle


\bigskip
\noindent
{\small{\bf ABSTRACT:}}
A group is said to be cube-free if its order is not divisible by the cube of any prime. Let $f_{cf,sol}(n)$ denote the isomorphism classes of solvable cube-free groups of order $n$. We find asymptotic bounds for $f_{cf,sol}(n)$ in this paper. Let $p$ be a prime and let $q = p^k$ for some positive integer $k$. We also give a formula for the number of conjugacy classes of the subgroups that are maximal amongst non-abelian solvable cube-free $p'$-subgroups of ${\rm GL}(2,q)$. Further, we find the exact number of split extensions of $P$ by $Q$ up to isomorphism of a given order where $P \in \{{\mathbb Z}_p \times {\mathbb Z}_p, {\mathbb Z}_{p^{\alpha}}\}$, $p$ is a prime, $\alpha$ is a positive integer and $Q$ is a cube-free abelian group of odd order such that $p \nmid |Q|$. 

\medskip
\noindent
{\small{\bf Keywords}{:}}
cube-free groups, conjugacy classes of subgroups, enumeration of finite groups, general linear groups, solvable groups.

\medskip
\noindent
{\small{\bf Mathematics Subject Classification-MSC2020}{:} }
20E28, 20E34, 20E45, 20F99.

\baselineskip=\normalbaselineskip
\section{Introduction}

 Enumeration of finite groups of a given order, up to isomorphism is a well known problem in group theory and has an old history.

 In 1893, Otto H$\ddot{\text{o}}$lder described groups of order $p^3$ and $p^4$. Soon after, he arrived at a formula for the number of groups of order $n$ when $n$ is square-free, that is, the square of no prime divides $n$. Let $f(n)$ denote the number of groups of order $n$ up to isomorphism. For square-free $n$,  H$\ddot{\text{o}}$lder's formula is given below.

 $$f(n) = \sum_{m \mid n} \prod_p \frac{p^{c(p)} -1}{p-1}$$

  where $p$ is a prime divisor of $n/m$ and $c(p)$ is the number of prime divisors $q$ of $m$ that satisfy $q \equiv 1\mod{p}$ (see \cite{H1893}, \cite{H1895}).

\medskip
In 1960, Graham Higman \cite{H1960} showed that
  
          $$f(p^m) \geq p^{\frac{2}{27}m^3 - O(m^2)},$$
          
  and a few years later in 1965 Charles Sims \cite{S1965}, proved that 
   
          $$f(p^m) \leq p^{\frac{2}{27}m^3 + O(m^{8/3})}.$$

In his paper Sims conjectures that the error term in the estimate could be $O(m^2)$ rather than $O(m^{8/3})$. An unpublished work of Mike Newman and Craig Seely brings down the error term to $O(m^{5/2})$. 
  
\medskip
{L{\'a}szl{\'o}} Pyber \cite{P1993} proved in 1991, (published in 1993), that
   
        $$f(n) \leq n^{\frac{2}{27}\mu(n)^2 + O(\mu(n)^{5/3})}$$
        
  where $\mu(n)$ is the highest power to which any prime divides $n$.
  
  \medskip
  A finite group $G$ is called an $A$-group if all the Sylow subgroups of $G$ are abelian. Clearly every cube-free group is an $A$-group.

  \medskip
  {L{\'a}szl{\'o}} Pyber also found an upper bound for the number of $A$-groups up to isomorphism (see \cite{P1993}). The upper bound for the number of solvable $A$-groups was further improved by Geetha Venkataraman in 1997 (see \cite{GV1997}). Therefore we also have a bound for the number of cube-free groups of a given order.

  \medskip
  The structure of cube-free groups was studied by Heiko Dietrich and Bettina Eick. They also presented an algorithm to construct cube-free groups up to isomorphism of a given order using ${\rm GAP}$ (see \cite{DE2005}). Later, S. Qiao and C. H. Li gave a more explicit structure of cube-free groups (see \cite{QL2011}). 
  
 \medskip
 The objective of this paper is to find asymptotic bounds for the number of cube-free groups of a given order $n$ up to isomorphism. Our upper bound is an improvement over the previous upper bound available through enumerating $A$-groups. Let $p$ be a prime and let $q = p^k$ for some positive integer $k$. In the process of finding upper bounds we give a formula for the number of conjugacy classes of the subgroups that are maximal amongst non-abelian solvable cube-free $p'$-subgroups of ${\rm GL}(2,q)$. Further, in the process of finding the lower bound for the number of cube-free groups of a given order up to isomorphism we will find closed formulae for the number of split extensions of $P$ by $Q$ where $P \in \{{\mathbb Z}_p \times {\mathbb Z}_p, {\mathbb Z}_{p^{\alpha}} \}$, $p$ is a prime, $\alpha$ is a positive integer and $Q$ is a cube-free abelian group of odd order such that $p \nmid |Q|$.   

 \medskip
 Let $n$ be a positive integer. We define $\omega(n)$ as the number of prime divisors of $n$ and $\tau(n)$ to be the number of divisors of $n$. 

 \medskip
 Let $G$ be a finite group and let $H \leq G$. We denote $[H]_G$ to be the conjugacy class of $H$ in $G$. Let $p$ be a prime and let $p \mid |G|$. Then we denote $O_{p'}(G)$ to be the largest normal $p'$-subgroup of $G$.

 \medskip
  Let $N$ and $Q$ be groups and let $\theta: Q \longrightarrow {\rm Aut}(N)$ be a homomorphism. A split extension $G$ of $N$ by $Q$ realises $\theta$ if, for all $x \in Q$ and $a \in N$
       $$\theta_x(a) = xax^{-1}$$
and we write $G = N \rtimes_{\theta} Q$.

\medskip
 Let $N$ and $Q$ be finite abelian groups with $\gcd(|N|,|Q|)=1$ and let ${\cal X}_{N,Q} = {\rm Aut}(N) \times {\rm Aut}(Q)$. Let ${\Gamma}_{N,Q}$ be the set of homomorphisms $\theta: Q \longrightarrow {\rm Aut}(N)$. There is an action of ${\cal X}_{N,Q}$ on ${\Gamma}_{N,Q}$ defined as follows. Let $(\kappa,\lambda) \in {\cal X}_{N,Q}$ where $\kappa \in {\rm Aut}(N)$ and $\lambda \in {\rm Aut}(Q)$. Let $\theta \in \Gamma_{N,Q}$. Then we define $(\kappa, \lambda)\theta = \theta'$, where $\theta'$ is defined by 

 \begin{equation*}\label{eq: action}
     \theta'_{x}(a) = \kappa\theta_{\lambda^{-1}(x)}\kappa^{-1}(a)\tag{$*$}
 \end{equation*}
 
 for all $a \in N$ and $x \in Q$.

\medskip
 Throughout the paper, $p$ is a prime, $q$ is a power of $p$ and $\mathbb{F}_q$ is the finite field of order $q$. Let $D(2,q)$, denote the subgroup of diagonal matrices of $\mbox{GL}(2,q)$. Any $A \in D(2,q)$ with diagonal entries $\lambda_1$ and $\lambda_2$ will be represented as ${\rm diag}(\lambda_1,\lambda_2)$. Let $M(2,q) = D(2,q) \rtimes \langle a \rangle$ be the subgroup of monomial matrices in ${\rm GL}(2,q)$, where $a = 
 \begin{pmatrix}
    0 & 1\\
    1 & 0
\end{pmatrix}$. Let $N(2,q)$ be  the normaliser of $S(2,q)$ where $S(2,q) \cong \mathbb{Z}_{q^2-1}$ is a Singer cycle.

\medskip
Now we shall state the main results of our paper.

\begin{thm}{\label{Enumeration_of_odd_cube_free_groups}}
Let $n$ be an odd positive cube-free integer. If $f_{cf,odd}(n)$ is the number of groups of order $n$ up to isomorphism then

\begin{equation*}
   f_{cf,odd}(n) \leq  2^{-\omega(n)\mu(n)}n^{2\mu(n) - 1/4}.
\end{equation*}    
\end{thm}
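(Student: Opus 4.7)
My plan is to leverage the structure theory of cube-free groups together with extension counting. Since $n$ is odd, the Feit--Thompson theorem makes every group of order $n$ solvable, and cube-freeness forces every Sylow subgroup to be abelian, so each such $G$ is an $A$-group. The Fitting subgroup $F = F(G)$ is therefore abelian and self-centralising, and every Sylow summand $F_p$ is isomorphic to one of ${\mathbb Z}_p$, ${\mathbb Z}_{p^2}$, or ${\mathbb Z}_p \times {\mathbb Z}_p$ (the latter two appearing only when $p^2 \mid n$). Invoking the structure theorem for cube-free groups (Dietrich--Eick, Qiao--Li), $G$ admits a splitting $G = F \rtimes H$ with $H$ embedded in ${\rm Aut}(F) = \prod_{p} {\rm Aut}(F_p)$, and two such split extensions are isomorphic precisely when the corresponding homomorphisms $\theta : H \to {\rm Aut}(F)$ lie in the same orbit of the natural ${\rm Aut}(F) \times {\rm Aut}(H)$-action, whose abelian case is the formula $(*)$.

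I would execute the count in three stages. First, enumerate the admissible isomorphism types of $F$: writing $n = \prod p_i^{\alpha_i}$ with each $\alpha_i \in \{1,2\}$, there are at most $2$ choices for $F_{p_i}$ when $\alpha_i = 1$ and at most $4$ when $\alpha_i = 2$. Second, for each candidate $F$, enumerate the subgroups $H \leq {\rm Aut}(F)$ of order $n/|F|$ up to conjugacy; for primes $p$ where $F_p \cong {\mathbb Z}_p \times {\mathbb Z}_p$ this reduces to counting conjugacy classes of non-abelian solvable cube-free $p'$-subgroups of ${\rm GL}(2,p)$, which is supplied by the formula obtained earlier in the paper. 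Third, for each pair $(F, H)$, count the orbits of homomorphisms $\theta : H \to {\rm Aut}(F)$ under the ${\rm Aut}(F) \times {\rm Aut}(H)$-action; because $F$ and ${\rm Aut}(F)$ factor over primes, this orbit count also factors, and the per-prime pieces fall under the split-extension count of ${\mathbb Z}_p \times {\mathbb Z}_p$ or ${\mathbb Z}_{p^\alpha}$ by a cube-free abelian group of coprime odd order---exactly the situation covered by the closed formulas promised in the introduction.

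The main obstacle will be sharpening the exponent to exactly $2\mu(n) - 1/4$ while extracting the prefactor $2^{-\omega(n)\mu(n)}$. The crude estimate $|{\rm Aut}(F)| \leq |F|^{2}$ produces the $n^{2\mu(n)}$ part directly when rank-$2$ elementary abelian Sylows are present, but the $1/4$-saving in the exponent and the $2^{-\omega(n)\mu(n)}$ factor have to emerge from the orbit reduction: in each local factor the action of ${\rm Aut}(F) \times {\rm Aut}(H)$ on homomorphisms typically produces orbits of size at least two, and these savings must be aggregated so as to exactly cancel the enumeration factor of the Fitting subgroups counted at the first stage and still leave the additional $n^{-1/4}$ slack. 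Keeping this bookkeeping sharp---uniformly over the number of primes contributing ${\rm GL}(2,p_i)$-factors to ${\rm Aut}(F)$---is where the explicit split-extension formulas will have to be invoked quantitatively rather than qualitatively.
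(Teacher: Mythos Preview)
Your plan diverges from the paper's argument and contains structural gaps that prevent it from reaching the stated bound. First, the complement $H \cong G/F(G)$ is a cube-free $A$-group but is not in general abelian, so neither the action $(*)$ (which is set up only for abelian $N$ and $Q$) nor the split-extension formulas of Theorem~\ref{isomorphism_1} apply to the pair $(F,H)$. Second, even granting an abelian $H$, the factorisation claimed in Stage~3 fails: while $\mathrm{Aut}(F)=\prod_p\mathrm{Aut}(F_p)$ acts componentwise on tuples $(\theta_p)_p$, the group $\mathrm{Aut}(H)$ acts \emph{diagonally}, so $(\mathrm{Aut}(F)\times\mathrm{Aut}(H))$-orbits on $\mathrm{Hom}(H,\mathrm{Aut}(F))$ do not split as a product of local orbit counts. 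Indeed, the product of the local counts is precisely what Corollary~\ref{lowr_bound} uses as a \emph{lower} bound, the wrong direction for Theorem~\ref{Enumeration_of_odd_cube_free_groups}. Third, Stages~2 and~3 as written are either redundant or circular: if Stage~2 really enumerates $H$ up to conjugacy inside $\mathrm{Aut}(F)$ then the extension is already determined and Stage~3 is superfluous, whereas if Stage~2 is meant to enumerate $H$ as an abstract group you are left counting cube-free groups of order $n/|F|$, the very problem you set out to bound.

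The paper proceeds instead by a Pyber--Venkataraman subdirect argument. It embeds $G$ into $\prod_i G/O_{p_i'}(G)$, observes that each factor has the shape $P_i\rtimes H_i$ with $P_i$ a normal Sylow $p_i$-subgroup and $H_i\le\mathrm{Aut}(P_i)$, enlarges each $H_i$ to an odd-order subgroup $M_i$ maximal among cube-free $p_i'$-subgroups of $\mathrm{Aut}(P_i)$, and bounds the number of ambient groups $\hat G=\prod_i(P_i\rtimes M_i)$ using Corollary~\ref{No_of_odd_cnjgcy_clsses}. With $\hat G$ fixed, $G$ is then recovered from a Sylow system inside $\hat G$, and the number of such systems is bounded via Lemma~\ref{no_of_cube-free_subgrps}. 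The prefactor $2^{-\omega(n)\mu(n)}$ comes from the inequality $|M_j|\le p_j^{\alpha_j}/2$, and the $-1/4$ in the exponent from the bound $|{\cal M}_i|\le\frac12 p_i^{3\alpha_i/4}$, not from any orbit reduction on homomorphisms.
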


\begin{thm}\label{Enumeration_of_cube_free_groups}
 Let $n$ be cube-free. If $f_{cf,sol}(n)$ is the number of solvable groups of order $n$ up to isomorphism then
 
\begin{equation*}
    f_{cf,sol}(n) \leq {n^{2\mu(n)}}.
\end{equation*}

\end{thm}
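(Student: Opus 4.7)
The plan is to reduce Theorem \ref{Enumeration_of_cube_free_groups} to Theorem \ref{Enumeration_of_odd_cube_free_groups} by isolating the $2$-part of $n$. Write $n = 2^a m$ with $m$ odd; since $n$ is cube-free, $a \in \{0,1,2\}$ and $\mu(n) = \max(a, \mu(m))$. When $a = 0$, every group of order $n$ is solvable by Feit--Thompson, so $f_{cf,sol}(n) = f_{cf,odd}(n)$, and Theorem \ref{Enumeration_of_odd_cube_free_groups} yields
\[
f_{cf,sol}(n) \leq 2^{-\omega(n)\mu(n)}\, n^{2\mu(n) - 1/4} \leq n^{2\mu(n)}.
\]
The slack of size $n^{1/4}$ here will be used to absorb the 2-part in the remaining case.

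For $a \in \{1, 2\}$, let $G$ be a solvable cube-free group of order $n$. By Hall's theorem (applied to the solvable group $G$) there is a Hall $2'$-subgroup $H \leq G$ of order $m$ and a Sylow 2-subgroup $P \leq G$ of order $2^a$; both are cube-free, and $P$ is abelian with $|P| \in \{2, 4\}$. The isomorphism class of $H$ is counted by $f_{cf,odd}(m)$ and is bounded by Theorem \ref{Enumeration_of_odd_cube_free_groups}, while the isomorphism class of $P$ contributes at most two further choices. I would then show that $G$ is determined by the pair $(H, P)$ together with an ``action'' $\theta$, taken up to the equivalence $(*)$ recalled in the introduction, and bound the resulting count.

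For the action count, since $P$ has at most two generators, each $\theta \colon P \to \mathrm{Aut}(H)$ is determined by at most two commuting elements of small order in $\mathrm{Aut}(H)$, so $|\mathrm{Hom}(P, \mathrm{Aut}(H))| \leq |\mathrm{Aut}(H)|^2$, and the orbit count under $\mathrm{Aut}(H) \times \mathrm{Aut}(P)$ is strictly smaller. The cube-free structure of $H$ keeps $|\mathrm{Aut}(H)|$ polynomially bounded in $m$ with exponent controlled by $\mu(m)$, since each Sylow $p$-subgroup of $H$ is one of $\mathbb{Z}_p$, $\mathbb{Z}_{p^2}$, or $\mathbb{Z}_p \times \mathbb{Z}_p$, each with automorphism group embedding into $\mathrm{GL}(2, p)$. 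Combining these estimates with the bound from Theorem \ref{Enumeration_of_odd_cube_free_groups} and tracking the jump from $\mu(m)$ to $\mu(n) = \max(a, \mu(m))$ gives $f_{cf,sol}(n) \leq n^{2\mu(n)}$.

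The main obstacle is the structural step where $G$ is parameterised by a triple $(H, P, \theta)$. As the Hall $2'$-subgroup $H$ is not always normal in $G$ (for instance in $A_4$, where the Sylow 2-subgroup is normal instead), $G$ is not \emph{a priori} a semidirect product $H \rtimes P$. Overcoming this requires invoking the structure theory of cube-free solvable groups from Dietrich--Eick \cite{DE2005} and Qiao--Li \cite{QL2011}, which guarantees that at least one of $H$ or $P$ is normal in $G$ so that $G$ does split as a semidirect product in one of the two orientations. Once this decomposition is secured, the remaining counting is a routine combination of Theorem \ref{Enumeration_of_odd_cube_free_groups} and the estimates above.
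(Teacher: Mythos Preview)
Your route is genuinely different from the paper's, and as written it does not close. The paper does \emph{not} reduce Theorem~\ref{Enumeration_of_cube_free_groups} to Theorem~\ref{Enumeration_of_odd_cube_free_groups}; it simply reruns the entire Fitting--subdirect-product argument of Theorem~\ref{Enumeration_of_odd_cube_free_groups} for all primes including $2$, replacing Corollary~\ref{No_of_odd_cnjgcy_clsses} (the $\tfrac12 p^{3/2}$ bound for odd maximal pieces) by Corollary~\ref{No_of_cnjgcy_clsses} (the $\tfrac12 p^{2}$ bound for arbitrary maximal pieces). That single substitution costs exactly the $n^{1/4}$ and the $2^{-\omega(n)\mu(n)}$ factors, yielding $n^{2\mu(n)}$ directly.

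Your structural step is fine: via the Qiao--Li normal form $G\cong(\mathbb Z_{ab}\times\mathbb Z_b)\rtimes(\mathbb Z_{cd}\times\mathbb Z_d)$ one does get that either the Sylow $2$-subgroup or the Hall $2'$-subgroup of a solvable cube-free group is normal, so $G$ is a semidirect product in one of the two orientations. The gap is in the extension count. First, your bound on $|\mathrm{Aut}(H)|$ (``each Sylow $p$-subgroup embeds into $\mathrm{GL}(2,p)$'') presumes $\mathrm{Aut}(H)\cong\prod_p\mathrm{Aut}(\text{Syl}_pH)$, i.e.\ that $H$ is nilpotent; but the Hall $2'$-subgroup $H$ of a cube-free solvable group is an arbitrary odd cube-free group and is typically non-abelian (e.g.\ $H=\mathbb Z_7\rtimes\mathbb Z_3$), so this factorisation fails. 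Second, even granting $H$ abelian, the estimate $|\mathrm{Hom}(P,\mathrm{Aut}(H))|\le|\mathrm{Aut}(H)|^2$ only gives a factor of order $m^{4}$ (since $|\mathrm{GL}(2,p)|<p^4$), whereas the slack you carried over from Theorem~\ref{Enumeration_of_odd_cube_free_groups} is merely $n^{1/4}$; for $\mu(n)=2$ your bound is roughly $m^{15/4}\cdot m^{4}=m^{31/4}$ against a target of $n^{4}\le 16\,m^{4}$. Saying the orbit count is ``strictly smaller'' does not recover four powers of $m$. To make your strategy work you would need a sharp bound on the number of $\mathrm{Aut}(H)$-conjugacy classes of subgroups of order at most $4$ in $\mathrm{Aut}(H)$ for \emph{non-abelian} cube-free $H$, together with the analogous (easier) bound for the orientation $P\rtimes H$; neither is supplied, and neither follows from the results available in the paper.
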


The result below will give the formulae for the number of split extensions of $N$ by a cube-free abelian odd order group $Q$ where $p \nmid |Q|$ and $N \in \{{\mathbb Z}_p \times {\mathbb Z}_p, {\mathbb Z}_{p^{\alpha}}\}$ for some positive integer $\alpha$. 

\medskip
Let $Q$ be a cube-free abelian group. It should be noted that any two subgroups of $Q$ of the same order are isomorphic and any such $Q \cong {\mathbb Z}_{cd} \times {\mathbb Z}_d$ where $c$ is cube-free, $d$ is square-free and $\gcd(c,d) = 1$.

\medskip
Let $P$ and $Q$ be finite abelian groups where $P \cong \langle a_1 \rangle \times \cdots \times \langle a_k \rangle$ such that $|a_i| = m_i$ and $m_{i+1} \mid m_{i}$ for all $i$ and $Q \cong \langle b_1 \rangle \times \cdots \times \langle b_r \rangle$ such that $|b_i| = n_i$ and $n_{i+1} \mid n_{i}$ for all $i$. Then we denote the set of split extensions of $P$ by $Q$ by ${\cal S}_{(m_1, \ldots, m_k),(n_1, \ldots , n_r)}$. 

\medskip
Let $H(t)$ be a group isomorphic to a reducible $p'$-subgroup of ${\rm GL}(2,p)$ of order $t$ and let $N_{red}(H(t))$ denote the number of conjugacy classes of reducible subgroups of ${\rm GL}(2,p)$ isomorphic to $H(t)$. Similarly if $K(v)$ is a group isomorphic to an irreducible $p'$-subgroup of ${\rm GL}(2,p)$ of order $v$, then $N_{irr}(K(v))$ denotes the number of conjugacy classes of irreducible subgroups of ${\rm GL}(2,p)$ that are isomorphic to $K(v)$. 

\begin{thm}{\label{isomorphism_1}}
      Let $c$ and $d$ be odd positive integers such that $p \nmid cd$, $c$ is cube-free, $d$ is square-free and ${\rm gcd}(c,d) = 1$. Let $l = \gcd(cd,p-1)$, $m = \gcd(d,p-1)$ and $s = \gcd(cd, p+1)$.

     \begin{enumerate}[$(\rm i)$]
         \item Let $f_{\cal S}(p^2cd^2)$ denote the number of isomorphism classes of the groups in $ {\cal S} = {\cal S}_{(p,p),(cd,d)}$. Then 

  $$f_{\cal S}(p^2cd^2) = \displaystyle\sum_{t \mid lm} {N_{red}}(H(t)) + \Omega(s,l)$$
where 

 $\Omega(s,l) =
\begin{cases}
   0 & \ \mbox{if } \ s = 1, \\
   \tau(l)(\tau(s)-1) &  \ otherwise.
\end{cases}$

\item Let $f_{\cal S}(p^{\alpha}cd^2)$ denote the number of isomorphism classes of groups in ${\cal S} = {\cal S}_{p^{\alpha},(cd,d)}$. Then 

  $$f_{\cal S}(p^{\alpha}cd^2) = \tau(l).$$

\end{enumerate}

\end{thm}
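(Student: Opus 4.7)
The plan is to invoke the standard bijection, valid in both parts because $\gcd(|N|,|Q|)=1$, between isomorphism classes of split extensions $N \rtimes_\theta Q$ and $\mathcal{X}_{N,Q}$-orbits on $\Gamma_{N,Q}$ under the action $(*)$. Counting these orbits factors naturally into two stages: first, enumerate the possible images $\theta(Q) \leq {\rm Aut}(N)$ up to ${\rm Aut}(N)$-conjugacy; then, for each such conjugacy class $[H]$, count orbits of surjections $Q \twoheadrightarrow H$ under the induced action of ${\rm Aut}(Q) \times \bigl(N_{{\rm Aut}(N)}(H)/C_{{\rm Aut}(N)}(H)\bigr)$. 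A key auxiliary fact I would establish first, by a Sylow-by-Sylow argument on $Q \cong \mathbb{Z}_{cd} \times \mathbb{Z}_d$ that exploits the cube-freeness of $Q$, is that ${\rm Aut}(Q)$ already acts transitively on surjections to any abelian quotient, ensuring that the second stage contributes exactly one orbit per conjugacy class of image.

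I would dispose of part~(ii) first. Here ${\rm Aut}(\mathbb{Z}_{p^\alpha})$ is cyclic of order $p^{\alpha-1}(p-1)$; the $p$-part is annihilated because $p \nmid |Q|$, so every image lies in the unique cyclic subgroup of order $p-1$. For each $t \mid l$ there is precisely one subgroup of order $t$ and, by the transitivity fact above, exactly one orbit of surjections onto it. Summing over divisors yields $\tau(l)$.

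For part~(i), where ${\rm Aut}(N) = {\rm GL}(2,p)$, the image $\theta(Q)$ is a cube-free abelian $p'$-subgroup of ${\rm GL}(2,p)$, and by Schur's lemma is either reducible (conjugate into $D(2,p) \cong \mathbb{Z}_{p-1} \times \mathbb{Z}_{p-1}$) or irreducible and cyclic (conjugate into the Singer cycle $S(2,p) \cong \mathbb{Z}_{p^2-1}$). In the reducible case, writing $Q = \langle b_1\rangle \times \langle b_2\rangle$ with $|b_1|=cd$ and $|b_2|=d$, the order of $\theta(b_1)$ divides $l$ and that of $\theta(b_2)$ divides $m$, so $|\theta(Q)| = t \mid lm$; the one-orbit-per-class principle then delivers the $\sum_{t\mid lm} N_{red}(H(t))$ summand. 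In the irreducible case, $\theta(Q)$ is cyclic of order $t \mid \gcd(cd, p^2-1)$ with $t \nmid p-1$; factoring $t = ab$ with $a\mid p-1$ and $b\mid p+1$ (unique since $t$ is odd, so $\gcd(a,b)=1$), I get $a\mid l$, $b\mid s$ and $b \geq 2$ by irreducibility. The unique cyclic subgroup of $\mathbb{F}_{p^2}^*$ of each admissible order $ab$ yields exactly one orbit, giving the count $\tau(l)(\tau(s)-1)$, which is $\Omega(s,l)$ when $s>1$ and $0$ otherwise.

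The principal obstacle will be a rigorous proof of the one-orbit-per-class claim in part~(i). Two ingredients are needed: first, the transitivity of ${\rm Aut}(Q)$ on surjections $Q \twoheadrightarrow H$ for cube-free abelian $H$, which I would reduce to Sylow subgroups and handle case-by-case using that each Sylow factor of $Q$ is $\mathbb{Z}_q$, $\mathbb{Z}_{q^2}$ or $\mathbb{Z}_q \times \mathbb{Z}_q$ (the last case providing ${\rm GL}(2,q)$-style transitivity on cyclic subgroups); second, computations showing that $N_{{\rm GL}(2,p)}(D(2,p))/C_{{\rm GL}(2,p)}(D(2,p))$ is generated by the swap matrix $a$, and that $N(2,p)/S(2,p)$ is generated by Frobenius, in order to confirm that these $N/C$ identifications do not collapse a priori distinct orbits. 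A subtlety to watch is whether a particular reducible $H \leq D(2,p)$ admits a larger normaliser than $M(2,p)$ (e.g.\ when $H$ is scalar), which must be absorbed into the counting convention for $N_{red}(H(t))$.
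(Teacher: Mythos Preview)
Your proposal is correct and follows essentially the same route as the paper: the orbit correspondence of Proposition~\ref{crrspndnc_thm}, then the reduction (your ``one-orbit-per-class'' claim, which is exactly Proposition~\ref{no_of_iso_classes}) to counting conjugacy classes of images $\theta(Q)\leq{\rm Aut}(N)$, and finally the reducible/irreducible split in ${\rm GL}(2,p)$ leading to the $\sum_{t\mid lm}N_{red}(H(t))$ and $\tau(l)(\tau(s)-1)$ terms. Your Sylow-by-Sylow plan for the transitivity fact is precisely how the paper proves Proposition~\ref{no_of_iso_classes} (citing \cite{SC2024} for the cyclic and elementary abelian cases); note, however, that once ${\rm Aut}(Q)$ is transitive on surjections $Q\twoheadrightarrow H$ there is already a single orbit, so your second ingredient---the $N/C$ computations for $D(2,p)$ and $S(2,p)$---is unnecessary, and the scalar-$H$ subtlety you flag is absorbed into the definition of $N_{red}(H(t))$ exactly as you anticipate.
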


\begin{rem}
    Let $H(t)$ be a cube-free reducible $p'$-subgroup of ${\rm GL}(2,q)$ of order $t$. Then ${N_{red}}(H(t))$ can be evaluated by using \cite[Theorem 1.2]{PKGV2024}.
\end{rem}

The result below gives us a lower bound for the number of cube-free groups up to isomorphism of a given odd order. The result will be stated using the classification of cube-free groups by S. Qiao and C. H. Li (see \cite{QL2011}). 

\medskip
Before stating our result we first note that if $G$ is a group of cube-free odd order, then by \cite[Theorem 3.9]{QL2011}  we have $G \cong ({\mathbb Z}_{ab} \times {{\mathbb Z}_{b}}) \rtimes ({\mathbb Z}_{cd} \times {{\mathbb Z}_d})$ where $a,b,c,d$ are suitable integers such that $\gcd(a,b)=1=\gcd(c,d)$, $ac$ is cube-free, $bd$ is square-free, prime divisors of $ab$ are not less than prime divisors of $cd$. In order to construct cube-free groups using \cite[Theorem 3.9]{QL2011} of S. Qiao and C. H. Li we will impose an additional condition on $({\mathbb Z}_{ab} \times {{\mathbb Z}_{b}}) \rtimes ({\mathbb Z}_{cd} \times {{\mathbb Z}_d})$ that $\gcd(a,c) = 1$.  The lower bound will be given in terms of the formulae we have obtained in the Theorem \ref{isomorphism_1}.   

\begin{cor}{\label{lowr_bound}}
   Let $a,b,c,d$ be mutually co-prime odd positive integers such that $b$ and $d$ are square-free, $a$ and $c$ are cube-free. Let $\{p_1, \ldots , p_k \}$ be the collection of all the prime divisors of $ab$ and let $P_i$ be the Sylow $p_i$-subgroup of ${\mathbb Z}_{ab} \times {\mathbb Z}_{b}$ of order ${p_i}^{\alpha_i}$ where $\alpha_i \in \{1,2\}$. Let ${\cal S}_{P_i,(cd,d)}$ be the class of split extensions of $P_i$ by ${\mathbb Z}_{cd} \times {\mathbb Z}_d$. Let $f_{\cal S}(a{b^2}c{d^2})$ denote the number of isomorphism classes of groups in ${\cal S}={\cal S}_{(ab,b),(cd,d)}$. Let $f_{{\cal S}_{{P_i,(cd,d)}}}({p_i}^{\alpha_i}c{d^2})$ denote the number of isomorphism classes of split extensions of $P_i$ by ${\mathbb Z}_{cd} \times {\mathbb Z}_d$. Then
   
    $$f_{\cal S}(a{b^2}c{d^2}) \geq \displaystyle\prod_{i=1}^{k} f_{{\cal S}_{{P_i,(cd,d)}}}({p_i}^{\alpha_i}c{d^2}).$$
\end{cor}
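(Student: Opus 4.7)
Set $N = {\mathbb Z}_{ab} \times {\mathbb Z}_b$ and $Q = {\mathbb Z}_{cd} \times {\mathbb Z}_d$. Since $a,b,c,d$ are mutually coprime, $\gcd(|N|,|Q|) = 1$, so we may apply the framework of $(*)$: isomorphism classes of the groups in ${\cal S}_{(ab,b),(cd,d)}$ are in bijection with the orbits of ${\cal X}_{N,Q} = {\rm Aut}(N) \times {\rm Aut}(Q)$ on $\Gamma_{N,Q}$, and likewise for each Sylow subgroup $P_i$. The plan is to decompose this global orbit problem along the Sylow decomposition of $N$ and exhibit a surjection from the global orbit set onto the Cartesian product of the local orbit sets.

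Because $N$ is abelian, $N = P_1 \times \cdots \times P_k$, and because the $|P_i|$ are pairwise coprime we have ${\rm Aut}(N) = \prod_{i=1}^{k}{\rm Aut}(P_i)$. Consequently $\Gamma_{N,Q} = \prod_{i=1}^{k}\Gamma_{P_i,Q}$: any $\theta \in \Gamma_{N,Q}$ corresponds to the tuple $(\theta^{(1)},\ldots,\theta^{(k)})$ with $\theta^{(i)}_x = \theta_x|_{P_i}$. The projection $\pi_i : {\cal X}_{N,Q} \to {\cal X}_{P_i,Q}$ sending $(\kappa_1,\ldots,\kappa_k,\lambda) \mapsto (\kappa_i,\lambda)$ is a surjective group homomorphism, and a short computation using $(*)$ together with the fact that every $\kappa \in {\rm Aut}(N)$ stabilises each Sylow subgroup of $N$ shows that the decomposition is equivariant: $\bigl((\kappa,\lambda)\theta\bigr)^{(i)} = \pi_i(\kappa,\lambda)\,\theta^{(i)}$ for every $i$.

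Equivariance implies that the rule
$$[\theta]_{{\cal X}_{N,Q}} \;\longmapsto\; \bigl([\theta^{(1)}]_{{\cal X}_{P_1,Q}},\ldots,[\theta^{(k)}]_{{\cal X}_{P_k,Q}}\bigr)$$
is a well-defined map from ${\cal X}_{N,Q}$-orbits on $\Gamma_{N,Q}$ into the Cartesian product of the local orbit sets. It is also surjective, since for any prescribed representatives $\eta_i \in \Gamma_{P_i,Q}$ the ${\cal X}_{N,Q}$-orbit of $(\eta_1,\ldots,\eta_k)$ projects to $([\eta_1],\ldots,[\eta_k])$. Comparing cardinalities of source and target yields the claimed inequality
$$f_{\cal S}(ab^{2}cd^{2}) \;\geq\; \prod_{i=1}^{k} f_{{\cal S}_{P_i,(cd,d)}}\bigl(p_i^{\alpha_i}cd^{2}\bigr).$$

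The main step requiring care is the equivariance above. The subtlety is that the ${\rm Aut}(Q)$-factor of ${\cal X}_{N,Q}$ acts \emph{diagonally} across all Sylow components: the same $\lambda$ twists every $\theta^{(i)}$. This is nevertheless compatible with the product decomposition because $\lambda$ operates on the source $Q$, which is identical across components, whereas the Sylow decomposition happens on the target ${\rm Aut}(N)$. The same feature is also the reason only a lower bound is obtained: the map on orbit spaces will typically fail to be injective, since two global orbits can project to the same tuple of local orbits once the diagonal $\lambda$-action is factored out.
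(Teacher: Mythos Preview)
Your argument is correct. You work directly with Proposition~\ref{crrspndnc_thm} (the bijection between isomorphism classes and ${\cal X}_{N,Q}$--orbits on $\Gamma_{N,Q}$), decompose $\Gamma_{N,Q}=\prod_i\Gamma_{P_i,Q}$ along the Sylow decomposition of $N$, and produce a \emph{surjection} from the global orbit space onto the product of the local orbit spaces; the inequality follows by comparing cardinalities. The equivariance computation and the surjectivity step are both fine.

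The paper proceeds differently: it first invokes Proposition~\ref{no_of_iso_classes} to replace orbits on $\Gamma_{N,Q}$ by conjugacy classes of the images $\theta(Q)$ inside ${\rm Aut}(N)$, and then defines an \emph{injection} $\psi$ going the other way, sending a tuple of local conjugacy classes $([H_1],\ldots,[H_k])$ to the global class $[H_1\times\cdots\times H_k]$. Your route is a little more robust: it uses only Proposition~\ref{crrspndnc_thm} and hence does not need the extra hypothesis on the Sylow subgroups of $Q$ that Proposition~\ref{no_of_iso_classes} requires, and it sidesteps the question of whether the direct product $H_1\times\cdots\times H_k$ is itself a homomorphic image of $Q$ (which it need not be when, say, some prime divides $c$ to the first power only). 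The paper's approach, on the other hand, is shorter once Proposition~\ref{no_of_iso_classes} is available and stays closer to the subgroup language used in the rest of the section. The two arguments are essentially dual to one another: your surjection on orbit spaces and the paper's injection on conjugacy classes encode the same componentwise projection/product relationship coming from ${\rm Aut}(N)=\prod_i{\rm Aut}(P_i)$.
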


The paper is organised as follows. In Section \ref{cnjgcy_clss_of_mxml_sbgrps}  we first find an upper bound for the number of conjugacy classes of abelian subgroups that are maximal amongst solvable cube-free $p'$-subgroups of ${\rm GL}(2,q)$. Then we find a closed formula for the number of conjugacy classes of non-abelian subgroups that are maximal amongst solvable cube-free $p'$-subgroups of ${\rm GL}(2,q)$. In Section \ref{uppr_bound_of_cbe_free_grps} we first give a formula for the number of elementary abelian subgroups of a given order of a finite abelian group of exponent $p^2$ and also a formula for the number of cyclic subgroups of order $p^2$. Then we prove Theorem \ref{Enumeration_of_odd_cube_free_groups} and Theorem \ref{Enumeration_of_cube_free_groups}. Let $N$ and $Q$ be finite abelian groups with $\gcd(|N|,|Q|)=1$ and let ${\cal X}_{N,Q} = {\rm Aut}(N) \times {\rm Aut}(Q)$. Let ${\Gamma}_{N,Q}$ be the set of homomorphisms $\theta: Q \longrightarrow {\rm Aut}(N)$. Let ${\cal X}_{N,Q}$ acts on ${\Gamma}_{N,Q}$ by the equation (\ref{eq: action}). In Section \ref{lowr_bound_of_cbe_free_grps} we first prove the equivalence between the ${\cal X}_{N,Q}$ orbits of ${\Gamma}_{N,Q}$ and the isomorphism classes of $P$ by $Q$. Further suppose that Sylow subgroups of $Q$ are either elementary abelian or cyclic then we will prove that there is an equivalence between the isomorphism classes of split extensions of $N$ by $Q$ and the conjugacy classes of images of orbit representatives of elements of ${\Gamma}_{N,Q}$ under the action of ${\cal X}_{N,Q}$. Finally we prove Theorem \ref{isomorphism_1} and Corollary \ref{lowr_bound}.

\section{Conjugacy classes of maximal solvable cube-free subgroups of \texorpdfstring{${\rm GL}(2,q)$}{GL(2,q)}}\label{cnjgcy_clss_of_mxml_sbgrps}

In this section we will find an upper bound on the number of conjugacy class of subgroups that are maximal amongst cube-free solvable $p'$-subgroups of ${\rm GL}(2,q)$. These results will later be used to find an upper bound for the number of cube-free groups of a given order up to isomorphism.

\medskip
We start with the case of counting the number of conjugacy classes of subgroups that are maximal amongst cube-free abelian $p'$-subgroups of ${\rm GL}(2,q)$. As this number differs for even and odd order subgroups we deal with the subgroups of even and odd orders separately. Proofs of both odd and even cases are essentially the same, so we only provide the proof for the odd case.

\begin{lemm}\label{No_of_cnjgcy_clsses_of_ablian_grps}
Let $p$ be an odd prime and let $q = p^k$ for some positive integer $k$. Then
\begin{enumerate}[{$(\rm i)$}]
     \item the number of conjugacy classes of odd order subgroups that are maximal amongst abelian cube-free $p'$-subgroups of ${\rm GL}(2,q)$ is at most $\frac{1}{2}(q-1)^{3/2} + 1$.
     \item the number of conjugacy classes of even order subgroups that are maximal amongst abelian cube-free $p'$-subgroups of ${\rm GL}(2,q)$ is at most $\frac{3}{4}(q-1)^{3/2} + 1$.
\end{enumerate}
\end{lemm}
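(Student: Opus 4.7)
The plan is to split the abelian cube-free $p'$-subgroups of ${\rm GL}(2,q)$ into the reducible ones (conjugate into the split torus $D(2,q)$) and the irreducible ones (conjugate into the Singer cycle $S(2,q)\cong {\mathbb Z}_{q^2-1}$), count each contribution, and combine. Every abelian $p'$-subgroup consists of semisimple elements and so is conjugate into a maximal torus, and these two tori exhaust the conjugacy classes of maximal tori in ${\rm GL}(2,q)$. Since $S(2,q)$ is cyclic it has a unique maximal cube-free subgroup of odd order, and all Singer cycles are conjugate in ${\rm GL}(2,q)$; this contributes at most one conjugacy class and accounts for the ``$+1$'' in the bound.

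For the reducible contribution, write $q-1=\prod_r r^{a_r}$. A maximal odd-order cube-free subgroup $H\le D(2,q)\cong {\mathbb Z}_{q-1}^2$ decomposes as the internal direct product, over the odd primes $r\mid q-1$, of its Sylow $r$-subgroups, each of which must be maximal cube-free inside ${\mathbb Z}_{r^{a_r}}\times {\mathbb Z}_{r^{a_r}}$ (were some piece $H_r$ not maximal, extending only that piece would give a strictly larger odd cube-free subgroup of $D(2,q)$). When $a_r=1$ the only choice is the full $r$-torsion ${\mathbb Z}_r\times {\mathbb Z}_r$. When $a_r\ge 2$, every cube-free subgroup of ${\mathbb Z}_{r^{a_r}}^2$ has order at most $r^2$ and the maximal ones lie in the $r^2$-torsion ${\mathbb Z}_{r^2}\times {\mathbb Z}_{r^2}$; they are either ${\mathbb Z}_r\times {\mathbb Z}_r$ or cyclic of order $r^2$, and an element count yields $(r^4-r^2)/\phi(r^2)=r(r+1)$ cyclic ones, for a total of $r^2+r+1$ choices. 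Hence the number of maximal cube-free odd-order subgroups of $D(2,q)$ is
$$T:=\prod_{r\text{ odd},\ a_r\ge 2}(r^2+r+1).$$

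Each such subgroup produces at most one ${\rm GL}(2,q)$-conjugacy class, so the reducible contribution is at most $T$. (One can sharpen this via the coordinate-swap action of $M(2,q)/D(2,q)=\langle a\rangle$, but the cruder bound already suffices.) It remains to show $T\le \tfrac12(q-1)^{3/2}$. For each odd $r\ge 3$ with $a_r\ge 2$ one has $r^2+r+1\le r^3\le r^{3a_r/2}$, and the ratio $r^{3a_r/2}/(r^2+r+1)$ is at least $27/13>2$ in the worst case $r=3$, $a_r=2$. Moreover, since $p$ is odd, $2\mid q-1$ contributes a factor $2^{3a_2/2}\ge 2^{3/2}$ to $(q-1)^{3/2}$ that is absent from $T$. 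Multiplying these ratios gives $(q-1)^{3/2}/T\ge 2^{3/2}>2$, whence $T\le \tfrac12(q-1)^{3/2}$, and adding the Singer contribution yields the claimed $\tfrac12(q-1)^{3/2}+1$.

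The main obstacle is the structural reduction in the middle paragraph: verifying that maximal cube-free odd-order subgroups of $D(2,q)$ are precisely the described products of maximal cube-free Sylow pieces, and pinning down the count $r^2+r+1$ via the enumeration of cyclic subgroups of order $r^2$ in ${\mathbb Z}_{r^2}\times {\mathbb Z}_{r^2}$. Once that is in hand, the upper bound on conjugacy classes and the factor-by-factor numerical comparison to $\tfrac12(q-1)^{3/2}$ are routine; the even case (ii) is handled identically, with the Sylow $2$-subgroup now contributing $2^2+2+1=7$ choices (and a slightly looser absorbing constant, explaining the factor $\tfrac34$ in place of $\tfrac12$).
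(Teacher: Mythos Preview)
Your argument for part (i) is correct but follows a different route from the paper. The paper invokes \cite[Theorem 1.2]{PKGV2024} to bound the number of ${\rm GL}(2,q)$-conjugacy classes of reducible subgroups isomorphic to each fixed isomorphism type $H$, sums over isomorphism types indexed by which Sylow pieces are cyclic, and obtains the bound $\tfrac12(q-1)\cdot 3^t$ with $t=|\mathcal P|$; then $3^t\le (q-1)^{1/2}$ finishes. You instead count the maximal cube-free subgroups of $D(2,q)$ themselves, getting the explicit product $T=\prod_{a_r\ge 2}(r^2+r+1)$, and compare factor-by-factor with $(q-1)^{3/2}$, exploiting the extra factor $2^{3a_2/2}\ge 2^{3/2}$ coming from the prime $2$ (which is absent from $T$). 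Your route is more self-contained in that it avoids the external conjugacy-class formula, at the cost of a slightly cruder inequality; the paper's route gives a tighter intermediate bound but depends on the cited result.

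For part (ii), however, your claim that ``the cruder bound already suffices'' is false. When you include the prime $2$ with $a_2\ge 2$, the $2$-part of your subgroup count becomes $7$, but the absorbing factor $2^{3a_2/2}$ in $(q-1)^{3/2}$ is now being used against it rather than being free. In the worst case $a_2=2$ with no odd prime dividing $q-1$, i.e.\ $q=5$, your subgroup count is $T'=7$ while $\tfrac34(q-1)^{3/2}=\tfrac34\cdot 8=6$, so $T'\not\le \tfrac34(q-1)^{3/2}$. (The lemma itself is still true for $q=5$: the scalar $\mathbb Z_4$ is not maximal since it sits inside $\mathbb Z_{12}\le S(2,5)$, and the remaining six order-$4$ subgroups of $D(2,5)$ fall into four ${\rm GL}(2,5)$-classes, giving at most $5\le 7$.) To repair your argument for (ii) you must either treat $q=5$ by hand, or actually use the coordinate-swap action you mention to gain roughly a factor of $2$ on the cyclic pieces---this is precisely the saving that the paper's approach gets automatically from the $\tfrac12$ in the conjugacy-class count of \cite[Theorem 1.2]{PKGV2024}.
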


\begin{proof} Let $M \leq {\rm GL}(2,q)$ be maximal amongst abelian cube-free $p'$-subgroups of odd order. Then either $M$ is reducible or $M$ is irreducible cyclic. Let $M$ be reducible. Then by \cite[Lemma 1.1]{PKGV2024} $M$ is conjugated to a subgroup of $D(2,q)$. Assume $M \leq D(2,q)$. Let $\{p_1, \ldots , p_k\}$ be the set all odd prime divisors of $q-1$. Due to maximality of $M$ we must have $|M| = {p_1}^2 \ldots {p_k}^2 = m$. Let ${\cal O}$ be the complete and irredundant set of representatives of isomorphism classes of subgroups of $D(2,q)$ of order $m$. Let $H \in {\cal O}$ and let ${\cal I}(H) = \{i \mid$ the Sylow $p_i$-subgroup of $H$ is cyclic$\}$.  By \cite[Theorem  1.2]{PKGV2024}  the number of conjugacy classes of reducible $p'$-subgroups isomorphic to $H$ in ${\rm GL}(2,q)$ is at most $ \frac{1}{2}\prod_{i \in {\cal I}(H)}2{p_i}^2$

  Let ${\cal P} = \{i \mid {p_i}^2 $ divides $q-1\}$. Let $N$ be the number of conjugacy classes of subgroups that are maximal amongst abelian cube-free reducible $p'$-subgroups of ${\rm GL}(2,q)$. Then we have

  \begin{align*}
      N & \leq \frac{1}{2}(q-1) \sum_{H \in {\cal O}} 2^{|{\cal I}(H)|}\\
        & = \frac{1}{2}(q-1) \sum_{d = 0}^{t}\sum_{H \in {\cal O}_d} 2^{|{\cal I}(H)|}\\
        & = \frac{1}{2}(q-1)\sum_{d = 0}^{t}|{\cal O}_d|2^d
  \end{align*}

  where ${\cal O}_d = \{H \in O \mid |{\cal I}(H)| = d\}$ and $t = |{\cal P}|$. Clearly the $|{\cal O}_d|$ is the number of subsets of ${\cal P}$ of order $d$. Therefore 
  \begin{align*}
      N & \leq \frac{1}{2}(q-1)3^t\\
        & \leq \frac{1}{2}(q-1)^{3/2}.
  \end{align*}
 If $M$ is a maximal abelian cube-free irreducible $p'$-subgroup of ${\rm GL}(2,q)$, then it is conjugated to a subgroup of the Singer cycle and so there will be at most one choice for the conjugacy classes of such subgroups. Thus the number of conjugacy classes of subgroups that are maximal amongst abelian odd cube-free $p'$-subgroups  of ${\rm GL}(2,q)$ is at most $\frac{1}{2}(q-1)^{3/2} + 1$.  
\end{proof}

\begin{cor}\label{No_of_odd_cnjgcy_clsses}
   Let $p$ be an odd prime. Then the number of conjugacy classes of odd order subgroups that are maximal amongst cube-free $p'$-subgroups  of ${\rm GL}(2,p)$ is at most $\frac{1}{2}p^{3/2} - 1$.
\end{cor}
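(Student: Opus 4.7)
The plan is to deduce the corollary from Lemma~\ref{No_of_cnjgcy_clsses_of_ablian_grps}(i) specialised to $q=p$, together with a short arithmetic estimate and a direct check at the three smallest odd primes. The first step is the structural remark that every odd-order $p'$-subgroup $H$ of $\mathrm{GL}(2,p)$ is automatically abelian. Indeed, by the Feit--Thompson theorem $H$ is solvable; if $H$ is reducible then, being a $p'$-group, Maschke's theorem makes it simultaneously diagonalisable, so $H$ is conjugate into the abelian torus $D(2,p)$; if $H$ is irreducible, Dickson's classification of solvable irreducible subgroups of $\mathrm{GL}(2,p)$ rules out conjugacy into $M(2,p)$ (whose odd part lies inside the reducible subgroup $D(2,p)$) and confines $H$ to a conjugate of $N(2,p)$, whose odd part sits inside the cyclic Singer group $S(2,p)$. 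Consequently inside $\mathrm{GL}(2,p)$ the families of odd-order cube-free $p'$-subgroups and of odd-order abelian cube-free $p'$-subgroups coincide, and Lemma~\ref{No_of_cnjgcy_clsses_of_ablian_grps}(i) with $q=p$ supplies the upper bound $\tfrac12(p-1)^{3/2}+1$ on the desired count.

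The next step is the arithmetic comparison $\tfrac12(p-1)^{3/2}+1 \le \tfrac12 p^{3/2}-1$, which rearranges to $p^{3/2}-(p-1)^{3/2}\ge 4$. The mean value theorem applied to $x\mapsto x^{3/2}$ on $[p-1,p]$ yields $p^{3/2}-(p-1)^{3/2}\ge \tfrac32\sqrt{p-1}$, which is $\ge 4$ exactly when $p-1\ge 64/9$, hence for every prime $p\ge 11$. This handles the corollary uniformly for all such $p$.

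All that remains is a verification at $p\in\{3,5,7\}$, for which the target bound $\tfrac12 p^{3/2}-1$ equals approximately $1.6$, $4.6$ and $8.3$; it therefore suffices to show in each case that the actual count is at most one. For $p=3$ the only odd $p'$-subgroup of $\mathrm{GL}(2,3)$ is trivial. For $p=5$ the primitive cube roots of unity lie in $\mathbb{F}_{25}\setminus\mathbb{F}_5$, so every order-$3$ element is irreducible and sits inside the Singer cycle of order $24$; Sylow's theorem then produces a single conjugacy class of $\mathbb{Z}_3$. For $p=7$ the cube roots of unity already lie in $\mathbb{F}_7$, so every order-$3$ element is diagonalisable over $\mathbb{F}_7$ and the Sylow $3$-subgroup is the reducible $\mathbb{Z}_3\times\mathbb{Z}_3\le D(2,7)$, again a single conjugacy class by Sylow. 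The only potentially delicate point of the argument is this small-prime check, but in each of the three cases the conjugacy picture is pinned down by Sylow's theorem together with the location of the cube roots of unity in $\overline{\mathbb{F}}_p$.
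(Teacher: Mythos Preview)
Your proof is correct and follows essentially the same strategy as the paper's: reduce to the abelian case, invoke Lemma~\ref{No_of_cnjgcy_clsses_of_ablian_grps}(i) for $p\ge 11$ together with the arithmetic estimate $\tfrac12(p-1)^{3/2}+1<\tfrac12 p^{3/2}-1$, and handle $p\in\{3,5,7\}$ by direct inspection. Your small-prime analysis is in fact slightly sharper than the paper's (you pin down the exact count $N=1$ via Sylow, whereas the paper is content with $N\le 2$), and your appeal to Feit--Thompson, while valid, is heavier than needed since Dickson's classification already forces odd-order subgroups of $\mathrm{PGL}(2,p)$ to be cyclic.
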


\begin{proof}
    Let $N$ be the number of conjugacy classes of odd order subgroups that are maximal amongst solvable cube-free $p'$-subgroups of ${\rm GL}(2,p)$. Let $M$ be an odd order subgroup that is maximal amongst solvable cube-free $p'$-subgroups of ${\rm GL}(2,p)$. Then by \cite[Lemma 1.1]{PKGV2024} $M$ is abelian. Now if $p = 3$, then the only maximal odd order cube-free $p'$-subgroup of ${\rm GL}(2,3)$ is the trivial subgroup. Hence $N = 1 < \frac{1}{2}3^{3/2} - 1$. 
    
    If $p \in \{5,7\}$, then a maximal reducible cube-free odd order $p'$-subgroup of ${\rm GL}(2,p)$ would be trivial or conjugate to the unique subgroup of $D(2,p)$ isomorphic to ${\mathbb Z}_l \times {\mathbb Z}_l$ for some $l \mid p-1$. Thus the number of conjugacy classes of subgroups that are maximal amongst cube-free reducible odd order $p'$-subgroups is $1$. If $M$ is a maximal abelian cube-free irreducible $p'$-subgroup of ${\rm GL}(2,p)$, then it is conjugated to a subgroup of the Singer cycle and so there will be at most one choice for the conjugacy classes of such subgroups. So $ N = 2 < \frac{1}{2}p^{3/2} - 1$.
    
    If $p \geq 11$, then by Lemma \ref{No_of_cnjgcy_clsses_of_ablian_grps}, $N \leq \frac{1}{2}(p-1)^{3/2} + 1$ and using standard calculus it can be shown that $\frac{1}{2}(p-1)^{3/2} + 1 < \frac{1}{2}p^{3/2} - 1$ for $p \geq 11$. 
\end{proof}

Now we deal with the non-abelian cube-free solvable $p'$-subgroups of ${\rm GL}(2,q)$. In this situation, we in fact give the precise number for the number of conjugacy classes of subgroups that are maximal amongst non-abelian solvable cube-free $p'$-subgroups of ${\rm GL}(2,q)$.

\medskip
Note that a non-abelian cube-free solvable $p'$-subgroup is irreducible, as otherwise by \cite[Lemma 1.1]{PKGV2024} it would be conjugate to a subgroup of $D(2,q)$. 

\begin{prop}\label{No_of_cnjgcy_clsses_of_non_ablian_grps}
      Let ${\cal S}$ be the set of all odd primes that divide $q-1$ and let ${\cal P} = \{r \in {\cal S} \mid r^2 \mid q-1\}$. Then the number of conjugacy classes of subgroups that are maximal amongst non-abelian solvable cube-free $p'$-subgroups  of ${\rm GL}(2,q)$ is  $$
     \begin{cases}
         2(3^t + 1)   & if \quad {\cal S} \not= {\cal P},\\
         2\times 3^t  & if \quad {\cal S} = {\cal P}
     \end{cases}$$
     where $t =  |{\cal P}|.$
\end{prop}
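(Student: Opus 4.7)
The plan is to split the count into monomial-type (subgroups conjugate into $M(2,q)$) and Singer-type (conjugate into $N(2,q)$) families. A non-abelian cube-free solvable $p'$-subgroup $M$ of ${\rm GL}(2,q)$ is irreducible by the remark preceding the proposition, and a solvable irreducible subgroup of ${\rm GL}(2,q)$ normalises either the split torus $D(2,q)$ or the non-split Singer cycle $S(2,q)$, so every such $M$ lies in one of the two families. These are disjoint because $D(2,q)$ and $S(2,q)$ are not ${\rm GL}(2,q)$-conjugate. I would therefore enumerate maximal candidates in $M(2,q)$ and in $N(2,q)$ separately and sum the counts.

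For the monomial case a maximal candidate has the form $M = H_0 \cup xH_0$, where $H_0 = M\cap D(2,q)$ is an $a$-invariant cube-free $p'$-subgroup of $D(2,q)$ and $x = da\in M\setminus D(2,q)$ satisfies $x^2 = dd^a = (\alpha\beta)I \in H_0$. For each $r\in{\cal P}$ the Sylow $r$-part of $H_0$ is one of three $a$-invariant subgroups of maximal order $r^2$: the full $r$-torsion $\mathbb{Z}_r\times\mathbb{Z}_r$, the scalar cyclic $\langle\zeta I\rangle\cong\mathbb{Z}_{r^2}$, or the anti-diagonal $\langle{\rm diag}(\zeta,\zeta^{-1})\rangle\cong\mathbb{Z}_{r^2}$; for each $r\in{\cal S}\setminus{\cal P}$ the unique admissible choice is $\mathbb{Z}_r\times\mathbb{Z}_r$, and maximality forces the $2$-part of $H_0$ to be $\langle -I\rangle$. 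Non-abelianness of $M$ requires at least one Sylow of $H_0$ to be non-scalar, which is automatic when ${\cal S}\neq{\cal P}$ (the $\cal S\setminus\cal P$ primes give full-torsion factors on which $a$ acts by swap) but excludes the all-scalar choice when ${\cal S}={\cal P}$; hence the number of admissible $H_0$ is $3^t$ or $3^t-1$ respectively. For each $H_0$, the possible $M$'s correspond to cosets $dH_0\in D(2,q)/H_0$ with $dd^a\in H_0$, modulo the substitution $d\mapsto dh$ ($h\in H_0$) which shifts $c=\alpha\beta$ by $h^ah$; a prime-by-prime computation yields $[Z(H_0):\{h^ah : h\in H_0\}] = 2$, so each $H_0$ gives exactly two conjugacy classes distinguished by $x^2 = I$ or $x^2 = -I$ (equivalently by whether the Sylow $2$ of $M$ is $\mathbb{Z}_2\times\mathbb{Z}_2$ or $\mathbb{Z}_4$). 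The monomial contribution is thus $2\cdot 3^t$ or $2(3^t-1)$.

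The Singer case runs in parallel: a maximal candidate takes the form $M = S_0\cup(sb)S_0$ where $S_0\leq S(2,q)$ is the cyclic subgroup of maximal cube-free $p'$-order on which the Frobenius $b$ acts non-trivially, and an analogous coset analysis on $S(2,q)/S_0$ under $S(2,q)$-conjugation (translation by $(1-q)\mathbb{Z}$) and $b$-conjugation (inversion) yields exactly two orbits. Combining the two contributions gives $2(3^t-1)+2 = 2\cdot 3^t$ when ${\cal S}={\cal P}$ and $2\cdot 3^t+2 = 2(3^t+1)$ when ${\cal S}\neq{\cal P}$, matching the statement. The most delicate step is the orbit analysis on both sides, especially the index-$2$ equality $[Z(H_0):\{h^ah\}]=2$ in the monomial case: this demands a careful Sylow-by-Sylow check that treats separately the full-torsion, scalar, and anti-diagonal shapes for each $r\in\cal P$, together with the observation that $\langle -I\rangle\subseteq H_0$ is forced by maximality and is precisely what provides the non-trivial $\mathbb{Z}_2$ quotient.
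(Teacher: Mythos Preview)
Your overall decomposition into the monomial family (conjugate into $M(2,q)$) and the Singer family (conjugate into $N(2,q)$), the enumeration of the $3^t$ admissible $a$-invariant subgroups $H_0\le D(2,q)$ of order $2\prod_{r\in\mathcal S}r^2$, and the final numerical split ($2\cdot 3^t$ or $2(3^t-1)$ from the monomial side plus $2$ from the Singer side) all match the paper's argument.

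The genuine gap is in how you obtain the factor $2$ for each fixed $H_0$. You parametrise the subgroups $M$ with $M\cap D(2,q)=H_0$ by cosets $dH_0$ satisfying $dd^a\in H_0$, and observe that the scalar $c=dd^a$ is well defined modulo $\{hh^a:h\in H_0\}$; you then compute that this quotient (what you write as $Z(H_0)/\{h^ah\}$---since $H_0$ is abelian you presumably mean $(H_0\cap Z)/\{h^ah\}$) has order $2$. But this only shows that the map $M\mapsto c\bmod\{hh^a\}$ is \emph{well defined} with two-element image; it does not show that it is \emph{injective on ${\rm GL}(2,q)$-conjugacy classes}. In fact the number of distinct subgroups $M$ with given $H_0$ is $(q-1)\,|H_0\cap Z|/|H_0|$, typically much larger than $2$, so something must collapse these. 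The missing ingredient is the conjugation action of $D(2,q)$: conjugation by $g\in D(2,q)$ replaces $d$ by $gg^{-a}d$, i.e.\ translates $d$ by an arbitrary element of the anti-diagonal torus while fixing $c$, and this (together with $d\mapsto d^a$) is exactly what identifies all $M$'s in a fibre of $c$. Without this step your ``orbit analysis'' does not establish the count; the same omission occurs in your Singer-case sketch. The paper avoids this computation altogether by invoking \cite[Theorem~1.3]{PKGV2024} (isomorphic imprimitive, resp.\ primitive, solvable cube-free $p'$-subgroups of ${\rm GL}(2,q)$ are conjugate) and \cite[Lemma~3.2]{PKGV2024}, which reduce the problem to counting isomorphism types---visibly two per $H_0$, distinguished by the Sylow $2$-subgroup being $\mathbb Z_2\times\mathbb Z_2$ or $\mathbb Z_4$.
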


\begin{proof}
     Suppose that $M$ is a non-abelian imprimitive maximal solvable cube-free $p'$-subgroup of ${\rm GL}(2,q)$, then by \cite[Lemma 1.1]{PKGV2024} $M$ is conjugated to a subgroup of $M(2,q)$.  Assume that $M \leq M(2,q)$. Then $M \cong H \rtimes P$ where $H \leq D(2,q)$ and $P$ is the Sylow  $2$-subgroup of $M$. Now we show that $|M| = 4{p_1}^2\ldots{p_s}^2$ where ${\cal S} = \{p_1,\ldots,p_s\}$. 
    
    \medskip
    Suppose that $|P| = 2$, then we can choose a scalar matrix of order $2$ and form a subgroup $L = M \times {\mathbb Z}_2$ which is a solvable cube-free, imprimitive $p'$-subgroup larger than $M$. Hence $|P| = 4$. Suppose that $r \in {\cal S}$ and $r \nmid |M|$. Let $R \leq D(2,q)$ be a subgroup isomorphic to ${\mathbb Z}_{r} \times {\mathbb Z}_{r}$. Then $R$ is normal in $M(2,q)$. Thus $L = MR$ is a solvable cube-free, imprimitive $p'$-subgroup which properly contains $M$. This also shows that if $r \in {\cal S}$, then $r^2 \mid |M|$. 
    
    \medskip
    Now by \cite[Theorem 1.3]{PKGV2024} any two isomorphic imprimitive subgroups of ${\rm GL}(2,q)$ are conjugate. Now we find the number isomorphism classes of non-abelian solvable cube-free imprimitve $p'$-subgroups of ${\rm GL}(2,q)$ of order $4m$ where $m = {p_1}^2 \ldots {p_s}^2$. By \cite[Lemma 1.1]{PKGV2024} this number is equal to the number of isomorphism classes of non-abelian cube-free imprimitive subgroups of $M(2,q)$ of order $4m$. By \cite[Lemma 3.2]{PKGV2024} it can be shown that the number of isomorphism classes of non-abelian cube-free imprimitive subgroups of order $4m$ is equal to $2|{\cal O}|$ where ${\cal O}$ is the set of all non central subgroups of $D(2,q)$ of order $m$ that are normal in $M(2,q)$. Let ${\cal O}_N$ be the set of all subgroups of $D(2,q)$ that are normal in $M(2,q)$ of order $m$. Clearly $|{\cal O}| = |{\cal O}_N|$ if ${\cal S} \neq {\cal P}$ and $|{\cal O}| = |{\cal O}_N| - 1$ if ${\cal S} = {\cal P}$. Let ${\cal O}_{I}$ be the complete and irredundant set of isomorphism classes of subgroups of $D(2,q)$ of order $m$ that are normal in $M(2,q)$. Then $|{\cal O}_N| = \sum_{H \in {\cal O}_I}|{\rm S}_H|$ where ${\rm {S}_H} = \{K \leq D(2,q)\mid aKa^{-1} = K$ and $K \cong H\}$. Let $H \in {\cal O}$ and let ${\cal I}(H) = \{i \mid$ the Sylow $p_i$-subgroup of $H$ is cyclic$\}$. Now if $r \in {\cal S}$, then there exists a unique subgroup $U$ of $D(2,q)$ isomorphic to ${\mathbb Z}_r \times {\mathbb Z}_r$. Therefore $|{\rm S}_H| = \prod_{i \in {\cal I}(H)}|{\rm S}_{P_i}|$ where $P_i$ is the Sylow $p_i$-subgroup of $H$ and ${\rm S}_{P_i} = \{K \leq D(2,q) \mid aP_ia^{-1} = P_i$ and $K \cong P_i\}$. If $i \in {\cal I}(H)$, then by \cite[Lemma 2.2]{PKGV2023}, $|{\rm S}_{P_i}| = 2$. So 
    \begin{align*}
        |{\cal O}_N| & = \sum_{H \in {\cal O}_I}|{\rm S}_{H}|\\
                     & = \sum_{d = 0}^{t} \sum_{H \in {\cal O}_d}|{\rm S}_{H}|\\
                     & = \sum_{d = 0}^{t}|{\cal O}_d|2^d \\
                     & = \sum_{d = 0}^{t} {\binom{t}{d}}2^d \\
                     & = 3^t.
    \end{align*}
    
   where ${\cal O}_d = \{H \in O \mid |{\cal I}(H)| = d\}$ and $t = |{\cal P}|$.
   
    \medskip
    Finally if $M$ is a non-abelian maximal cube-free primitive $p'$-subgroup of ${\rm GL}(2,q)$, then by \cite[Lemma 1.1]{PKGV2024}, $M$ is conjugate to a subgroup of $N(2,q)$ and $M = K \rtimes P$ where $K \leq S(2,q)$ and $P$ is a Sylow $2$-subbgroup of $M$. By maximality of $M$, we have that $K$ is the maximal cube-free odd order $p'$-subgroup of $S(2,q)$. Now if $4 \nmid |M|$, then we can choose an element of order $2$ from $S(2,q)$ and form a subgroup $L \cong M \times {\mathbb Z}_2$ which is a solvable cube-free primitive $p'$-subgroup and larger than $M$. Hence $4 \mid |M|$. Since $M = K \rtimes P$ where $K$ is a unique subgroup contained in $S(2,q)$, we will only have two choices for $M$ in this case. Also by  \cite[Theorem 1.3]{PKGV2024} any two isomorphic solvable cube-free primitive $p'$-subgroups are conjugate.
\end{proof}

Now we provide the list of subgroups of ${\rm GL}(2,q)$ that can be taken as the representatives of conjugacy classes of subgroups maximal amongst solvable cube-free imprimitive $p'$-subgroups of ${\rm GL}(2,q)$. 

\begin{rem}
    Assume the notation of Proposition \ref{No_of_cnjgcy_clsses_of_non_ablian_grps}. Let $r \in {\cal P}$ and let $\lambda_r \in {{\mathbb F}_q}^*$ be an element of order $r^2$. Let $a_r = {\rm diag}(\lambda_r,{\lambda_r}^{-1})$ and let $b_r = {\rm diag}({\lambda}_r,{\lambda_r})$. Then the subgroups $G_{1r} = \langle a_r\rangle$, $G_{2r} = \langle b_r\rangle$ and $G_{3r} = \langle {a_r}^r \rangle \times \langle {b_r}^r \rangle$ are normal in $M(2,q)$ and have order $r^2$. Let $u \in {\mathbb F}_q^*$ be the unique element of order $2$. Then $P_1 = \langle {\rm diag}(u,u) \rangle \times \langle a \rangle $ and ${P_2} = \langle {\rm diag}(1,u)a \rangle$ are subgroups of $M(2,q)$ of order $4$. Suppose that ${\cal S} \not = {\cal P}$. Let $l$ be the product of primes of ${\cal S}$ that are not in ${\cal P}$ and let $U$ be the unique subgroup of $D(2,q)$ isomorphic to ${\mathbb Z}_l \times {\mathbb Z}_l$. If ${\cal S} \not = {\cal P}$, then the set $\{P_{j} \ltimes ((\prod_{r \in {\cal P}}G_{ir}) \times U)\mid i \in \{1,2,3\},j \in \{1,2\} \}$ can be taken as the representatives of the conjugacy classes non-abelian maximal cube-free imprimitive $p'$-subgroups of ${\rm GL}(2,q)$. If ${\cal S} = {\cal P}$, then we remove subgroups $P_1 \times \prod_{r \in {\cal S}}\langle b_r\rangle$ and $P_2 \times \prod_{r \in \cal S}\langle b_r \rangle$ from $\{P_{j} \ltimes \prod_{r \in {\cal P}}G_{ir}\mid i \in \{1,2,3\},j \in \{1,2\} \}$ and obtain the representatives of the conjugacy classes non-abelian maximal cube-free imprimitive $p'$-subgroups of ${\rm GL}(2,q)$. 
\end{rem}

\begin{cor}\label{No_of_cnjgcy_clsses}
    The number of conjugacy classes of subgroups that are maximal amongst solvable cube-free $p'$-subgroups  of ${\rm GL}(2,p)$ is at most $\frac{1}{2}p^{2} - 1$.
    
\end{cor}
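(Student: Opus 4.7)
The plan is to partition the conjugacy classes of subgroups maximal amongst solvable cube-free $p'$-subgroups of ${\rm GL}(2,p)$ into three disjoint classes and bound each one using an earlier result. Any such maximal subgroup $M$ is of odd order (hence abelian by \cite[Lemma 1.1]{PKGV2024}), of even order and abelian, or non-abelian. In the non-abelian case, the arguments in the proof of Proposition \ref{No_of_cnjgcy_clsses_of_non_ablian_grps} show that $4$ divides $|M|$ in both the imprimitive and primitive settings, so the non-abelian contribution lies entirely in the even order count and the three classes are genuinely disjoint.

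For the three bounds I would directly invoke the preceding results. Corollary \ref{No_of_odd_cnjgcy_clsses} bounds the odd order contribution by $\frac{1}{2}p^{3/2}-1$. Any subgroup that is abelian and maximal amongst solvable cube-free $p'$-subgroups is in particular maximal amongst abelian cube-free $p'$-subgroups, so Lemma \ref{No_of_cnjgcy_clsses_of_ablian_grps}(ii) bounds the even abelian contribution by $\frac{3}{4}(p-1)^{3/2}+1$. Finally, Proposition \ref{No_of_cnjgcy_clsses_of_non_ablian_grps} bounds the non-abelian contribution by $2(3^t+1)$ where $t=|{\cal P}|$; since every $r\in{\cal P}$ is an odd prime with $r^2\mid p-1$, the distinct primes in ${\cal P}$ satisfy $(\prod_{r\in{\cal P}} r)^2\le p-1$, whence $3^t\le\prod_{r\in{\cal P}} r\le(p-1)^{1/2}$ and the non-abelian contribution is at most $2(p-1)^{1/2}+2$.

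Adding the three bounds yields an estimate of the form $\frac{5}{4}p^{3/2}+O(p^{1/2})$, which is comfortably smaller than $\frac{1}{2}p^2-1$ once $p$ is sufficiently large. I would finish by a short calculus check that the combined estimate does not exceed $\frac{1}{2}p^2-1$ for all $p\ge 11$, say. The main obstacle, and the reason a one-line inequality will not do, is the behaviour at the small primes $p\in\{3,5,7\}$, where the $\frac{5}{4}p^{3/2}$ estimate is too crude; as in the proof of Corollary \ref{No_of_odd_cnjgcy_clsses} I would treat each of these primes individually, using the explicit representatives listed in the remark following Proposition \ref{No_of_cnjgcy_clsses_of_non_ablian_grps}, together with the description of the abelian maximal cube-free $p'$-subgroups inside $D(2,p)$ and inside $S(2,p)$ via \cite[Theorem 1.2]{PKGV2024}, to enumerate the maximal subgroups directly and verify the bound $\frac{1}{2}p^2-1$ by hand.
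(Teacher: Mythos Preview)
Your approach is essentially the paper's own: split the maximal solvable cube-free $p'$-subgroups into the abelian and the non-abelian ones, bound the abelian contribution via Lemma~\ref{No_of_cnjgcy_clsses_of_ablian_grps} and the non-abelian contribution via Proposition~\ref{No_of_cnjgcy_clsses_of_non_ablian_grps}, and finish the small primes by hand. The paper is a little more economical, and it is worth noting why. For odd $p$, a subgroup $M$ that is maximal amongst \emph{all} solvable cube-free $p'$-subgroups of ${\rm GL}(2,p)$ can never have odd order: adjoining the central involution $-I$ produces a strictly larger solvable cube-free $p'$-subgroup. Hence your ``odd-order'' class is empty, and the term $\tfrac{1}{2}p^{3/2}-1$ from Corollary~\ref{No_of_odd_cnjgcy_clsses} is superfluous. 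Dropping it, the paper gets $N\le \tfrac{3}{4}(p-1)^{3/2}+\tfrac{3}{2}(p-1)^{1/2}+3<\tfrac{1}{2}p^2-1$ already for $p\ge 7$, so only $p\in\{2,3,5\}$ need direct verification; your looser estimate forces the threshold up to $p\ge 11$.

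One genuine omission: your list of small primes $\{3,5,7\}$ leaves out $p=2$. Both Lemma~\ref{No_of_cnjgcy_clsses_of_ablian_grps} and Corollary~\ref{No_of_odd_cnjgcy_clsses} are stated only for odd $p$, so your general argument does not cover this case at all. It is easy to patch---${\rm GL}(2,2)\cong S_3$ has a unique maximal cube-free $2'$-subgroup, of order~$3$, and $1=\tfrac{1}{2}\cdot 4-1$---but it should be said.
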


\begin{proof}
   For $p \in \{2,3,5\}$, it can be shown by standard arguments that the inequality holds. For $p \geq 7$ by Lemma \ref{No_of_cnjgcy_clsses_of_ablian_grps}, $(\rm ii)$  and by Proposition \ref{No_of_cnjgcy_clsses_of_non_ablian_grps} we have that $N \leq \frac{3}{4}(p-1)^{3/2} + \frac{3}{2}(p-1)^{1/2} + 3 < \frac{1}{2}p^2 - 1$. 
\end{proof}

\section{On the upper bound for cube-free groups}{\label{uppr_bound_of_cbe_free_grps}}

In this section we will find upper bounds for the number of cube-free groups of a given order up to isomorphism. Further we will see that the upper bound we provide for the number of cube-free solvable groups of a given order up to isomorphism is an improvement over the upper bound for the number of cube-free solvable groups of a given order up to isomorphism that we can obtain from \cite[Corollary 1.1]{GV1997}.

\medskip
We begin by finding the number of elementary abelian subgroups of a given order and the number of cyclic subgroups of order $p^2$ of a finite abelian $p$-group of exponent $p^2$.

\begin{lemm}\label{no_of_cube-free_subgrps}
    Let $Q = H \times N$ be an abelian $p$-group where $H \cong {\mathbb Z}_{p^2} \times \cdots \times {\mathbb Z}_{p^2}$ and $N = {\mathbb Z}_p \times \cdots \times {\mathbb Z}_p$ are subgroups of $Q$ of orders $p^{2u}$ and $p^{v}$ respectively. Let $s = u + v$. Then 

    \begin{enumerate}[$(\rm i)$]
        \item the number of elementary abelian subgroups of $Q$ of order $p^t$ is 
             $${\cal N} = \frac{(p^s-1)(p^{s-1} -1) \cdots (p^{s-(t-1)}-1)}{(p^t-1)(p^{t-1}-1)\cdots (p-1)};$$
        \item the number of cyclic subgroups of $Q$ of order $p^2$ is $p^{s-1}\frac{(p^u - 1)}{p-1}$.
    \end{enumerate}
    In particular, the number of subgroups of $Q$ of order $p^{\alpha}$ where $\alpha \in \{1,2\}$ is at most $2|Q|^{\alpha}/p^{\alpha}$.
\end{lemm}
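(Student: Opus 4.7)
The plan is to treat each part in turn using the decomposition $Q = H \times N$ with $H$ of exponent $p^2$ and $N$ elementary abelian, and then combine (i) and (ii) with simple estimates to prove (iii).

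For (i), I would identify the socle $\Omega_1(Q) = \{x \in Q : x^p = 1\}$. Since $\Omega_1(H) \cong (\mathbb{Z}_p)^u$ and $N$ is already elementary abelian of rank $v$, we get $\Omega_1(Q) \cong \mathbb{F}_p^{u+v} = \mathbb{F}_p^s$. An elementary abelian subgroup of $Q$ is precisely a subgroup of $\Omega_1(Q)$, so the count of such subgroups of order $p^t$ equals the number of $t$-dimensional $\mathbb{F}_p$-subspaces of $\mathbb{F}_p^s$, which is the Gaussian binomial $\binom{s}{t}_p$ and coincides with $\mathcal{N}$.

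For (ii), the plan is to count elements of order exactly $p^2$ in $Q$ and divide by $\phi(p^2) = p(p-1)$. Writing $x = (h,n) \in H \times N$, one has $|x| = p^2$ iff $h \in H \setminus \Omega_1(H)$, since $n$ has order at most $p$. The number of such $h$ is $p^{2u} - p^u = p^u(p^u-1)$, and $n$ is free, giving $p^{u+v}(p^u - 1) = p^s(p^u - 1)$ elements of order $p^2$; dividing by $p(p-1)$ yields the claim.

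For (iii), I would split according to $\alpha$. When $\alpha = 1$, every subgroup of order $p$ is elementary abelian, so by (i) the count is $(p^s-1)/(p-1) \leq 2p^{s-1} \leq 2p^{2u+v-1} = 2|Q|/p$. When $\alpha = 2$, every subgroup of order $p^2$ is either cyclic or elementary abelian, so by (i) and (ii) the total is $\binom{s}{2}_p + p^{s-1}(p^u-1)/(p-1)$. Using the crude bounds $\binom{s}{2}_p \leq p^{2s-1}/((p-1)(p^2-1))$ and $p^{s-1}(p^u-1)/(p-1) \leq p^{s+u-1}/(p-1)$, the target inequality reduces, after dividing through by $2|Q|^2/p^2 = 2p^{2s+2u-2}$, to
\[
\frac{p^{1-s-u}}{p-1} + \frac{p^{1-2u}}{(p-1)(p^2-1)} \;\leq\; 2,
\]
which holds in all relevant cases: for $u \geq 1$ each summand is at most $1/(p-1) \leq 1$, and for $u = 0$ (in which case $s = v \geq 1$, else $Q$ is trivial) the first term is at most $1/(p-1)$ while the second is at most $p/((p-1)(p^2-1)) \leq 2/3$ uniformly in $p \geq 2$. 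The only obstacle here is the bookkeeping of these degenerate corner cases, not any substantive mathematical difficulty.
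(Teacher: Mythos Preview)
Your proof is correct and parts (i) and (ii) follow essentially the same approach as the paper: identify the socle $\Omega_1(Q)$ (the paper calls it $Q_p$) as an $\mathbb{F}_p$-vector space of dimension $s$ and count subspaces for (i), then count elements of order exactly $p^2$ and divide by $\phi(p^2)$ for (ii). The only cosmetic difference is that for (ii) the paper computes the number of order-$p^2$ elements as $|Q|-|Q_p|=p^{s+u}-p^s$ directly, whereas you obtain the same quantity via the coordinate decomposition $x=(h,n)$.

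For the ``in particular'' statement (iii), the paper's written proof actually stops after (ii) and does not justify the bound $2|Q|^{\alpha}/p^{\alpha}$ at all. Your argument supplies this missing verification, splitting into the cyclic and elementary-abelian contributions and bounding each; the arithmetic you carry out is correct in all the corner cases you flag. So here you are giving more detail than the paper does.
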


\begin{proof}
    There exists a unique subgroup  $Q_p = \{x \in Q \mid x^p =1\}$ of $Q$ and every element of order $p$ is contained in $Q$. Since $Q_p \cong {\mathbb Z}_p \times \cdots \times {\mathbb Z}_p$ is a vector space of dimension $s$, the number of elementary abelian subgroups of $Q$ of order $p^t$ is equal to the number of subspaces of $Q_p$ of dimension $t$ and this number is equal to ${\cal N}$.  

    \medskip
    Since every element of order $p$ is contained in the subgroup $Q_p$, the number of elements of $Q$ of order $p^2$ is $|Q| - |Q_p|$. Therefore the number of cyclic subgroups of $Q$ of order $p^2$ is $(p^{s+u} - p^s)/p(p-1) = p^{s-1}\frac{(p^u -1)}{p-1}$. 
    \end{proof}

We now prove Theorem \ref{Enumeration_of_odd_cube_free_groups} for which we follow an approach similar to that of \cite[Theorem 12.9]{BNV2007}.

\medskip
\noindent
{\bf Proof of Theorem \ref{Enumeration_of_odd_cube_free_groups}}
Let $G$ be an odd order cube-free group of order ${p_1}^{\alpha_1} \ldots {p_k}^{\alpha_k}$ where the $p_i$'s are distinct primes and let $G_i = G/O_{p_i'}(G)$. Then $G$ is embedded in the group $G_1 \times \cdots \times G_k$ as a subdirect product. Note that $O_{p_i}(G_i) = F(G_i)$ the Fitting subgroup of $G_i$. Let $P_i$ be a Sylow $p_i$-subgroup of $G_i$. Since $G_i$ is a solvable cube-free group we have $C_{G_i}(F(G_i)) = F(G_i)$. Thus $P_i \leq C_{G_i}(O_{p_i}(G_i)) = C_{G_i}(F(G_i)) = F(G_i)$. Thus $P_i = O_{p_i}(G_i)$ and so $G_i \cong P_i \rtimes H_i$ with $H_i$ being a Hall $p_i'$-subgroup of $G_i$. Further since $H_i$ acts on $P_i$ via conjugation and the kernel of this action would be $P_i \cap H_i = \{1\}$, therefore $H_i \leq {\rm Aut}(P_i)$. 

Let $M_i \leq {\rm Aut}(P_i) $ be an odd order subgroup that is maximal among solvable cube-free $p_i'$-subgroups of ${\rm Aut}(P_i)$ such that $H_i \leq M_i$. Let $\hat{G_i} = P_i \rtimes M_i$ and let $\hat{G} = \hat{G_1} \times \cdots \times \hat{G_k} $. Then $G \leq \hat{G}$. The number of choices for $\hat{G}$ up to isomorphism is $ \leq \prod_{i=1}^{k}$ the number of choices for $\hat{G}_i$ up to isomorphism. Note that the isomorphism classes of $\hat{G}_{i}$ depends on the isomorphism class of $P_i$ and the conjugacy class of $M_i$ in ${\rm Aut}(P_i)$. Let ${\cal M}_i = [{\cal M}_{cf,odd,{p_i}'}(A_i)]$ be the set of conjugacy classes of odd order subgroups that are maximal amongst solvable cube-free ${p_i}'$-subgroups in $A_i$.  Clearly if $P_i$ is cyclic, then $|{\cal M}_i| = 1$. Therefore the number of choices for $\hat{G}_i$ up to isomorphism by Corollary \ref{No_of_odd_cnjgcy_clsses} is $$\prod_{i=1}^{k} \sum_{\substack{|P_i| = {p_i}^{\alpha_i} \\ P_i abelian}} |{\cal M}_i| \leq \prod_{i=1}^{k}2^{-1}{p_i}^{\frac{3}{4}{\alpha_i}}.$$

Now let $\hat{G} $ be fixed. Let $\{S_1, \ldots ,S_k \}$ be a Sylow system for $G$. So $S_i$ is a Sylow 
$p_i$-subgroup of $G$ and for all $i,j$ we have $S_iS_j = S_jS_i$. Thus $G = S_1S_2 \ldots S_k$. Further there exists $Q_1, \ldots , Q_k$, part of a Sylow system for $\hat{G}$ such that $S_i \leq Q_i.$ Using Lemma \ref{no_of_cube-free_subgrps} we get that the number of choices for $G$ as a subgroup of $\hat{G}$ (up to conjugacy) is at most 
\begin{equation*}
    |\{S_1, \ldots, S_k\} \mid S_i \leq Q_i \quad and \quad|S_i| = p_i^{\alpha_i} \}| \leq 2^{k}n^{-1}\prod_{i=1}^{k} |Q_i|^{\alpha_i}
\end{equation*}
where $Q_i$ is the Sylow $p_i$-subgroup of $\hat{G}$.

For each $i$, we have $Q_i = R_{i1} \times \cdots \times R_{ik}$ for some Sylow $p_i$-subgroups $R_{ij}$ of $\hat{G_j}$. Note that $|R_{ii}| = p_i^{\alpha_i}$. Let 
\begin{align*}
    X_j = \prod^{k}_{i=1, i\neq j}  R_{ij}. 
\end{align*}
Since the $Q_i$ form part of a Sylow system for $\hat{G}$, we have that $Q_iQ_j = Q_jQ_i$. So $X_j$ is a a solvable, cube-free  $p_i'$-subgroup of $\hat{G_j} = P_jM_j$. So $X_j$ is isomorphic to a subgroup of $M_j$.

Let $\mu = \mu(n)$. Then

\begin{align*}
    \prod_{i=1}^{k} |Q_i|^{\alpha_i} & = \prod_{i=1}^{k} \prod_{j=1}^{k}|R_{ij}|^{\alpha_i} \\
    & \leq   \prod_{t=1}^{k}(|R_{tt}|^{\alpha_t})(\prod_{i \neq j} |R_{ij}|)^{\mu} \\
    & = \prod_{i=1}^{k} p_i^{\alpha_i^2} \prod_{j=1}^{k} |X_j|^{\mu}.
\end{align*}
Using \cite[Lemma 1.1]{PKGV2024} we can show that $|X_j| \leq |M_j| \leq p_j^{\alpha_j}/2$. Thus 

\begin{align*}
    \prod_{i=1}^{k} |Q_i|^{\alpha_i} 
    & \leq \prod_{i=1}^{k}p_i^{\alpha_i^2} \prod_{j=1}^{k} 2^{-\mu} p_j^{\alpha_j \mu} \\
    & \leq 2^{-k\mu} n^{\mu} \prod_{i=1}^{k} p_i^{\alpha_i^2}     
\end{align*}

Putting together all the estimates, we get

\begin{align*}
    f_{cf,odd}(n) & \leq 2^{-k(\mu -1)} n^{\mu -1} \prod_{i=1}^{k} p_i^{\alpha_i^2} \prod_{i=1}^{k} 2^{-1}{p_i^{\frac{3}{4}{\alpha_i}}}\\
    & \leq  2^{-k\mu} n^{\mu -1/4} \prod_{i=1}^{k}p_i^{\alpha_i^2}\\
     & \leq 2^{-k\mu} n^{\mu -1/4} \prod_{i=1}^{k}p_i^{\alpha_i \mu} \\
     & =  2^{-\omega(n)\mu(n)}n^{2\mu(n) - 1/4}.
\end{align*}

\medskip
\noindent
{\bf Proof of Theorem \ref{Enumeration_of_cube_free_groups} } 
Let $G$ be a solvable cube-free group of order ${p_0}^{\alpha_0}{p_1}^{\alpha_1} \ldots {p_k}^{\alpha_k}$ where $p_0 = 2$ and the $p_i$'s for $i \geq 1$ are distinct odd primes. The proof is exactly the same as in Theorem \ref{Enumeration_of_odd_cube_free_groups} except that  the $M_i \leq {\rm Aut}(P_i)$ is now maximal amongst solvable cube-free $p_i'$-subgroup of ${\rm Aut}(P_i)$. We use Corollary \ref{No_of_cnjgcy_clsses} instead of Corollary \ref{No_of_odd_cnjgcy_clsses} to get 

\begin{align*}
    f_{cf,sol}(n) & \leq   n^{\mu} \prod_{i=0}^{k} p_i^{\alpha_i^2}\\
     & \leq  n^{\mu} \prod_{i=0}^{k}p_i^{\alpha_i \mu} \\
     & = n^{2\mu(n)}.    
\end{align*}

\begin{rem}
     Let $G$ be a cube-free non-solvable group of order $n$. Then by \cite[Theorem 3.10]{QL2011}, $G = PSL(2,r) \times L $, where r is some suitable prime and $|L|$ is of odd order. Using this result we can estimate  that the number of non-solvable non-isomorphic cube-free groups of order $n$ is roughly of $O(n^5)$.
\end{rem}

    
 


    

\section{On the lower bound for cube-free groups}{\label{lowr_bound_of_cbe_free_grps}}

 We shall prove Theorem \ref{isomorphism_1} and Theorem \ref{lowr_bound} in this section. Let $p$ be a prime. Let $Q$ be a cube-free abelian group of odd order and let $p \nmid |Q|$. As stated earlier Theorem \ref{isomorphism_1}  will provide us the number of split extensions of ${\mathbb Z}_p \times {\mathbb Z}_p$ by $Q$  and the number of split extensions of ${\mathbb Z}_{p^{\alpha}}$ by $Q$ where $\alpha$ is a positive integer. The lower bound for the number of cube-free groups of a given order up to isomorphism is presented in Theorem \ref{lowr_bound} and will be given in terms of the formulae we obtain in Theorem \ref{isomorphism_1}.

\medskip
Recall that if  $N$ and $Q$ are finite abelian groups with $\gcd(|N|,|Q|)=1$, ${\cal X}_{N,Q} = {\rm Aut}(N) \times {\rm Aut}(Q)$ and ${\Gamma}_{N,Q} = {\rm Hom}(Q,{\rm Aut(N)})$, then ${\cal X}_{N,Q}$ acts on ${\Gamma}_{N,Q}$ by the equation (\ref{eq: action}).

\medskip
 The result below is a generalisation of \cite[Proposition 18.2]{BNV2007} and the proof is quite similar. We use the notation established above.

 \begin{prop}{\label{crrspndnc_thm}}
There is a one-to-one correspondence between the isomorphism classes of groups $N \rtimes Q$ and the set of orbits of ${\cal X}_{N,Q}$ on $\Gamma_{N,Q}$.

 \end{prop}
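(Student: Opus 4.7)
The plan is to set up a map from orbits of ${\cal X}_{N,Q}$ on $\Gamma_{N,Q}$ to isomorphism classes of split extensions by sending $[\theta] \mapsto [N \rtimes_\theta Q]$, and then show this map is well-defined, surjective, and injective. Surjectivity is immediate from the definition of a split extension: every $G = N \rtimes Q$ realises some homomorphism $\theta : Q \to \mathrm{Aut}(N)$ via conjugation, so $G \cong N \rtimes_\theta Q$.

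For well-definedness, I would take $(\kappa,\lambda) \in {\cal X}_{N,Q}$ and $\theta' = (\kappa,\lambda)\theta$, and exhibit an explicit isomorphism $\phi : N \rtimes_\theta Q \to N \rtimes_{\theta'} Q$ by the formula $\phi(a,x) = (\kappa(a), \lambda(x))$. Verifying that $\phi$ is a homomorphism reduces, after expanding both $\phi((a,x)(b,y))$ and $\phi(a,x)\phi(b,y)$, to the identity $\kappa \theta_x(b) = \theta'_{\lambda(x)}(\kappa(b))$, which is precisely the defining equation $(\ast)$ evaluated at $\lambda(x)$. This is the routine computational part.

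The main step is injectivity, and the key tool there is the coprimality hypothesis $\gcd(|N|,|Q|) = 1$. Suppose $\psi : N \rtimes_\theta Q \to N \rtimes_{\theta'} Q$ is an isomorphism. Since $N$ and $N'$ are the unique Hall subgroups of their orders on each side, they are characteristic, so $\psi(N) = N$ and $\kappa := \psi|_N \in \mathrm{Aut}(N)$. The image $\psi(Q)$ is another complement to $N$ in $N \rtimes_{\theta'} Q$, and since $N$ is abelian and coprime to $Q$, all complements are conjugate by an element of $N$ (this is the standard consequence of Schur--Zassenhaus; $N$ being abelian makes the argument elementary). Thus after composing $\psi$ with an inner automorphism coming from $N$, which does not change $\kappa$ since inner automorphisms by $N$ restrict trivially on $N$ (as $N$ is abelian), we may assume $\psi(Q) = Q$ and set $\lambda := \psi|_Q \in \mathrm{Aut}(Q)$.

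Finally, applying $\psi$ to the relation $x a x^{-1} = \theta_x(a)$ in $N \rtimes_\theta Q$ gives
\begin{equation*}
\kappa \theta_x(a) = \lambda(x)\,\kappa(a)\,\lambda(x)^{-1} = \theta'_{\lambda(x)}(\kappa(a))
\end{equation*}
for all $a \in N$ and $x \in Q$, so $\theta'_{\lambda(x)} = \kappa \theta_x \kappa^{-1}$; substituting $y = \lambda(x)$ yields $\theta'_y = \kappa \theta_{\lambda^{-1}(y)} \kappa^{-1}$, which is exactly $(\ast)$, so $\theta' = (\kappa,\lambda)\theta$. The only subtle point I anticipate is making the adjustment $\psi \mapsto \mathrm{inn}_n \circ \psi$ clean — specifically, being careful that conjugating by an element of $N$ genuinely fixes the restriction $\psi|_N$ (which it does because $N$ is abelian), so that after the adjustment both $\kappa$ and $\lambda$ are available simultaneously.
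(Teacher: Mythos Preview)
Your proposal is correct and follows essentially the same route as the paper: both exhibit the explicit isomorphism $\phi(a,x)=(\kappa(a),\lambda(x))$ for one direction, and for the converse both use the coprimality hypothesis (via Schur--Zassenhaus/Hall) to conjugate the image of $Q$ back onto $1\times Q$, then read off $\kappa$ and $\lambda$ as restrictions and recover relation~$(\ast)$ from the conjugation identity. The only cosmetic difference is that the paper composes with the inner automorphism \emph{first} and defines $\kappa,\lambda$ afterward, so your extra care about conjugation by an element of $N$ preserving $\kappa$ is unnecessary there---but your version is equally valid.
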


 \begin{proof}
 We need to prove that $N \rtimes_{\theta} Q \cong N \rtimes_{\theta'} Q$ if and only if $\theta$ and $\theta'$ lie in the same orbit of ${\cal X}_{N,Q}$.

 \medskip
 Let $(\kappa, \lambda) \in {\cal X}_{N,Q}$ be such that $(\kappa,\lambda)\theta = \theta'$, so $\theta$ and $\theta'$ satisfy (\ref{eq: action}). Define the map $\phi: N \rtimes_{\theta} Q \longrightarrow N \rtimes_{\theta'} Q$ by $\phi(a,x) = (\kappa(a),\lambda(x))$ for all $a \in N$ and $x \in Q$. It can be checked that $\phi$ is an isomorphism.

 \medskip
 Conversely, suppose that  $\phi: N \rtimes_{\theta} Q \longrightarrow N \rtimes_{\theta'} Q$ is an isomorphism. Now $\phi(1 \times Q)$ is a Hall $|N|'$-subgroup of $N \rtimes_{\theta'} Q$, there exists an inner automorphism $\gamma$ of $N \rtimes_{\theta'} Q$ such that $\gamma(\phi(1 \times Q)) = 1 \times Q$. We may therefore assume $\phi(1 \times Q) = 1 \times Q$. Clearly $\phi(N \times 1) = N \times 1$. Define $\kappa: N \longrightarrow N$ and $\lambda: Q \longrightarrow Q$ as 
 \begin{center}
      $(\kappa(a),1) = \phi((a,1))$ for all $a \in N$ and
 \end{center}

 \begin{center}
     $(1,\lambda(x)) = \phi((1,x))$ for all $x \in Q$.
 \end{center}
It is easy to check that $\kappa \in {\rm Aut}(N)$ and $\lambda \in {\rm Aut}(Q)$. Thus for all $a \in N$ and $x \in Q$

\begin{center}
    $\phi((1,x))\phi((a,1)) = (1,\lambda(x))(\kappa(a),1) = (\theta'_{\lambda(x)}(\kappa(a)),\lambda(x))$ and 
\end{center}

\begin{center}
    $\phi((1,x)(a,1)) = \phi(\theta_x(a),x) = (\kappa(\theta_x(a), \lambda(x)))$.
\end{center}
Thus $\theta'_{\lambda(x)}(\kappa(a)) = \kappa\theta_{x}(a)$ for all $a \in N$ and $x \in Q$ which is same as (\ref{eq: action}).
     
 \end{proof}

\begin{prop}{\label{no_of_iso_classes}}
    Let $N$ be a finite abelian group and let $Q$ be a finite abelian group whose order is co-prime to $|N|$. Let the Sylow subgroups of $Q$ be cyclic or elementary abelian. Then $N \rtimes_{\theta_1} Q$ is isomorphic to $N \rtimes_{\theta_2} Q$ if and only if $\theta_1(Q)$ is conjugate to $\theta_2(Q)$ in ${\rm Aut}(N)$.
\end{prop}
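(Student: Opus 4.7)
The plan is to exploit Proposition \ref{crrspndnc_thm} so that the isomorphism $N \rtimes_{\theta_1} Q \cong N \rtimes_{\theta_2} Q$ becomes the statement that there exist $\kappa \in {\rm Aut}(N)$ and $\lambda \in {\rm Aut}(Q)$ with $\theta_2(x) = \kappa\,\theta_1(\lambda^{-1}(x))\,\kappa^{-1}$ for all $x \in Q$. The forward direction is then immediate: such an equation forces $\theta_2(Q) = \kappa\,\theta_1(Q)\,\kappa^{-1}$, so the images are conjugate in ${\rm Aut}(N)$.

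For the converse, suppose $\theta_2(Q) = \kappa\,\theta_1(Q)\,\kappa^{-1}$ for some $\kappa \in {\rm Aut}(N)$. Replace $\theta_2$ by $\theta_3 := \kappa^{-1}\theta_2(\cdot)\kappa$; this is a homomorphism $Q \to {\rm Aut}(N)$ with $\theta_3(Q) = \theta_1(Q) =: H$. It then suffices to produce $\lambda \in {\rm Aut}(Q)$ with $\theta_3 \circ \lambda = \theta_1$, since the pair $(\kappa,\lambda)$ will realise $(\kappa,\lambda)\theta_1 = \theta_2$ and hence an isomorphism via Proposition \ref{crrspndnc_thm}. Write $Q = \prod_p Q_p$ as the product of its Sylow subgroups; because ${\rm Aut}(Q) = \prod_p {\rm Aut}(Q_p)$, it is enough to construct $\lambda_p \in {\rm Aut}(Q_p)$ with $\theta_3 \circ \lambda_p = \theta_1$ on each $Q_p$. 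Note that $\theta_i(Q_p)$ is the Sylow $p$-subgroup of the abelian group $H$ (since $\theta_i(Q/Q_p)$ has order coprime to $p$), so $\theta_1(Q_p) = \theta_3(Q_p) = H_p$ for every prime $p$.

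It now remains to handle the two structural possibilities for $Q_p$. If $Q_p = \langle x\rangle$ is cyclic, then $\theta_1(x)$ and $\theta_3(x)$ both generate $H_p$, hence $\theta_1(x) = \theta_3(x)^{m}$ for some integer $m$ coprime to $|H_p|$, and in particular coprime to $p$; defining $\lambda_p(x) = x^{m}$ gives an automorphism of $Q_p$ with $\theta_3\lambda_p = \theta_1$. If $Q_p$ is elementary abelian, then so is its homomorphic image $H_p$, and both become $\mathbb{F}_p$-vector spaces with $\theta_1,\theta_3 \colon Q_p \twoheadrightarrow H_p$ linear surjections; a standard linear algebra argument (match a basis of $\ker\theta_1$ to a basis of $\ker\theta_3$, then extend compatibly using surjectivity of $\theta_3$ and a preimage basis over $H_p$) produces $\lambda_p \in {\rm GL}(Q_p) = {\rm Aut}(Q_p)$ with $\theta_3\lambda_p = \theta_1$. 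Taking the product $\lambda = \prod_p \lambda_p$ then yields the desired automorphism and completes the proof. The only real pinch point is verifying that the integer $m$ in the cyclic case is automatically coprime to $p$; this is precisely where the hypothesis on the Sylow structure of $Q$ enters, preventing the pathology of an $m$ divisible by $p$ that would fail to be invertible when $Q_p$ has higher exponent than $H_p$.
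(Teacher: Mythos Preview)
Your proof is correct and follows the same overall strategy as the paper: invoke Proposition~\ref{crrspndnc_thm} for the forward direction, and for the converse reduce to one Sylow subgroup $Q_p$ at a time, producing on each a $\lambda_p \in {\rm Aut}(Q_p)$ that intertwines the two homomorphisms. The only difference is that where the paper cites \cite{SC2024} for the existence of $\lambda_p$ in the cyclic and elementary abelian cases, you supply explicit elementary constructions (a power map, respectively a change of basis over $\mathbb{F}_p$), which makes your argument self-contained.

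One small expository correction: your closing remark misidentifies where the structural hypothesis on $Q$ actually bites. In the cyclic case there is no pinch at all---$H_p$ is a $p$-group, so any $m$ coprime to $|H_p|$ (when $|H_p|>1$) is automatically coprime to $p$, and when $|H_p|=1$ the identity suffices. The hypothesis is needed rather to exclude Sylow subgroups such as $\mathbb{Z}_{p^2}\times\mathbb{Z}_p$, where two surjections onto a common cyclic image can have non-isomorphic kernels and therefore cannot differ by an automorphism of the domain. This does not affect the validity of your proof, only the commentary.
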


\begin{proof}
    If $N \rtimes_{\theta_1} Q$ is isomorphic to $N \rtimes_{\theta_2} Q$, then by Proposition \ref{crrspndnc_thm}, $\theta_1(Q)$ and $\theta_2(Q)$ are conjugate in ${\rm Aut}(N)$.

    \medskip
    Suppose $\theta_1(Q)$ and $\theta_2(Q)$ are conjugate in ${\rm Aut}(N)$, then there exsits $\gamma \in {\rm Aut}(N)$ such that $\theta_2(Q) = \gamma\theta_1(Q)\gamma^{-1}$. Let $q_i$ be a prime dividing $|Q|$ and let $Q_i$ be the Sylow $q_i$-subgroup of $Q$. Then since $Q$ is abelian we have $\theta_2(Q_i) = \gamma\theta_1(Q_i)\gamma^{-1}$. Since $Q$ is abelian it is enough to show that for each prime divisor $q_i$ of $|Q|$ there exists a $\lambda_i \in {\rm Aut}(Q_i)$ such that $\theta_2(a) = \gamma\theta_1(\lambda_i(a))\gamma^{-1}$ for all $a \in Q_i$.
     
     First suppose $Q_i \leq ker(\theta_1)$, then $Q_i \leq ker(\theta_2)$ and we can take $\lambda_i$ as an identity automorphism. If $Q_i$ is cyclic or an elementary abelian $q_i$-subgroup of $Q$, then by \cite[Corollary 3.10]{SC2024} or by \cite[Proposition 4.1]{SC2024} we get the desired $\lambda_i$.
    \end{proof}

Now we shall prove Theorem \ref{isomorphism_1}. We use the notation established before the statement of Theorem \ref{isomorphism_1}.

\medskip
{\bf Proof of Theorem 1.3} Let $N = {\mathbb Z}_p \times {\mathbb Z}_p$ and let $Q = {\mathbb Z}_{cd} \times {{\mathbb Z}_d}$. Let ${\cal O}$ be the set of all images of $Q$ up to isomorphism in ${\rm GL}(2,p)$. Then by Proposition \ref{no_of_iso_classes} we have

\begin{align*}
    f_{\cal S}(p^2cd^2)
      = \sum_{\substack{H \in {\cal O} \\ H red}}N_{red}(H(t)) + \sum_{\substack{H \in {\cal O}\\ H irred}}N_{irr}(N(H(t))
\end{align*}

Now we consider the case when $H \in {\cal O}$ and $H$ is a reducible $p'$-subgroup of ${\rm GL}(2,p)$. Then by \cite[Lemma 1.1]{PKGV2024} $H \cong {\mathbb Z}_u \times {\mathbb Z}_v$ where $u \mid p-1$ and $v \mid u$. Note that $u$ is cube-free and $v$ is square-free. Suppose $\theta(Q) \cong {\mathbb Z}_u \times {\mathbb Z}_v$ where $u \mid p-1$ and $v \mid u$ for some $\theta \in \Gamma_{N,Q}$, then since $u$ is the exponent of $\theta(Q)$ we have $u \mid \gcd(cd,p-1)$. Also by \cite[Theorem 10.57]{JJR1995} there is a subgroup $\hat{K} \leq Q$ such that $\theta(Q) \cong \hat{K}$. If $r$ is a prime dividing $v$, then the Sylow $r$-subgroup of $\hat{K}$ is isomorphic to ${\mathbb Z}_r \times {\mathbb Z}_r$ therefore, we must have $v \mid d$. Thus $v \mid \gcd(d,p-1)$. Further it is not difficult to see that there exists a $\phi  \in {\Gamma}_{N,Q}$ such that $\phi(Q) \cong {\mathbb Z}_l \times {\mathbb Z}_m$ where $l = {\rm gcd}(cd,p-1)$ and $m = {\rm gcd}(d,p-1)$. Fix $K \leq Q$ such that $K \cong {\mathbb Z}_l \times {\mathbb Z}_m$. Since $Q$ is a cube-free abelian group any two subgroups of $Q$ of same order are isomorphic. Therefore an isomorphic copy of a reducible image of $Q$ is contained in $K$. Therefore

\begin{equation*}
    f_{{\cal S}}(p^2cd^2)  = \sum_{\substack{t \mid lm \\ H \leq K}}N_{red}(H(t)) + \sum_{H \in {\cal O}}N_{irr}(N(H(t))
\end{equation*}

Since any two subgroups of $Q$ of same order are isomorphic we can simply write 

\begin{equation*}\label{eq:no_of_iso_clsses}
    f_{\cal S}(p^2cd^2)  = \sum_{{t \mid lm}}N_{red}(H(t)) + \sum_{H \in {\cal O}}N_{irr}(N(H(t))\tag{$**$}
\end{equation*}

\medskip
Now if $\theta \in {\Gamma}_{N,Q}$ is irreducible then $\theta(Q)$ is cyclic and by \cite[Theorem 2.3.3]{S1992} the number of conjugacy classes of irreducible images of $Q$ is equal to the number of irreducible images of $Q$. Further by \cite[Theorem 2.3.2]{S1992} there exists an irreducible representation of $Q$ of degree $2$ if and only if $s \neq 1$ where $s = \gcd(cd,p+1)$. Let $s \neq 1$. Then we can show easily that there exists an irreducible representation $\theta \in \Gamma_{N,Q}$ with $\theta(Q) \cong {\mathbb Z}_t$ if and only if $t \mid \gcd(cd,p^2-1)$ and $t \nmid p-1$. Since $c$ and $d$ are odd positive integers we have $\gcd(s,l) = 1$ where $l = \gcd(cd,p-1)$, therefore $\gcd(p^2-1,cd) = sl$. It is not difficult to see that the number of divisors of $\gcd(p^2-1,cd)$ that do not divide $p-1$ is equal to $\tau(sl)-\tau(l) = (\tau(s)-1)\tau(l)$. By the equation (\ref{eq:no_of_iso_clsses}) we have

\begin{align*}
     f_{{\cal S}}(p^2cd^2) = \sum_{t \mid lm}N_{red}(H(t)) + \Omega(s,l).
\end{align*}

\medskip
 Let $N = {\mathbb Z}_{p^{\alpha}}$ and let $Q = {\mathbb Z}_c \times {{\mathbb Z}_d}^2$. Since ${\rm Aut}({\mathbb Z}_{p^{\alpha}}) \cong {\mathbb Z}_{p^{\alpha} - p^{\alpha -1}}$ is cyclic, for each $\theta \in \Gamma_{N,Q}$ the image $\theta(Q)$ is cyclic and $|\theta(Q)| \mid \gcd(cd,p-1)$. Further it can easily be seen that there exists a $\phi \in {\Gamma_{N,Q}}$ such that $\phi(Q) \cong {\mathbb Z}_l$. Thus by Proposition \ref{no_of_iso_classes},  $f_{{\cal S}}(p^{\alpha}cd^2) = \tau(l)$.

\medskip
{\bf Proof of Corollary 1.4} Let $N = {\mathbb Z}_{ab} \times {\mathbb Z}_b$. Then since $N = P_1 \times \cdots \times P_k$ we have ${\rm Aut}(N) = {\rm Aut}(P_1) \times \cdots \times {\rm Aut}(P_k)$. Let ${\cal A} = {\rm Aut}(N)$ and let ${\cal P}_i = {\rm Aut}(P_i)$. Let ${\cal O}$ be set of all conjugacy classes of the subgroups of ${\cal A}$ and let ${\cal O}_i$ be the set of all conjugacy classes of the subgroups of ${\cal P}_i$. Then we define a map $\psi: {\cal O}_1 \times \cdots \times {\cal O}_k  \longrightarrow {\cal O}$ as $([H_1]_{{\cal P}_1}, \ldots , [H_k]_{{\cal P}_k}) \longmapsto [H]_{\cal A}$ where $H = H_1 \times \cdots \times H_k$. It is easy to check that $\psi$ is injective and the result follows from Proposition \ref{no_of_iso_classes}.

\section{Acknowledgements}

\noindent Prashun Kumar would like to acknowledge the UGC-SRF grant ({\emph{identification number}}: 201610088501) which is enabling his doctoral work.

\end{document}